

\documentclass[preprint,12pt]{elsarticle}



\usepackage{graphicx}

\usepackage{amsmath}
\usepackage{amsfonts}
\usepackage{color}

\hyphenation{pre-fac-tor}
\hyphenation{ma-xi-mum}

\begin{document}

\newcommand{\hide}[1]{}
\newcommand{\tbox}[1]{\mbox{\tiny #1}}
\newcommand{\half}{\mbox{\small $\frac{1}{2}$}}
\newcommand{\sinc}{\mbox{sinc}}
\newcommand{\const}{\mbox{const}}
\newcommand{\trc}{\mbox{trace}}
\newcommand{\intt}{\int\!\!\!\!\int }
\newcommand{\ointt}{\int\!\!\!\!\int\!\!\!\!\!\circ\ }
\newcommand{\eexp}{\mbox{e}^}
\newcommand{\bra}{\left\langle}
\newcommand{\ket}{\right\rangle}
\newcommand{\EPS} {\mbox{\LARGE $\epsilon$}}
\newcommand{\ar}{\mathsf r}
\newcommand{\im}{\mbox{Im}}
\newcommand{\re}{\mbox{Re}}
\newcommand{\bmsf}[1]{\bm{\mathsf{#1}}}
\newcommand{\mpg}[2][1.0\hsize]{\begin{minipage}[b]{#1}{#2}\end{minipage}}

\newcommand{\CC}{\mathbb{C}}
\newcommand{\NN}{\mathbb{N}}
\newcommand{\PP}{\mathbb{P}}
\newcommand{\RR}{\mathbb{R}}
\newcommand{\QQ}{\mathbb{Q}}
\newcommand{\ZZ}{\mathbb{Z}}

\newcommand{\p}{\partial}

\renewcommand{\a}{\alpha}
\renewcommand{\b}{\beta}
\renewcommand{\d}{\delta}
\newcommand{\D}{\Delta}
\newcommand{\g}{\gamma}
\newcommand{\G}{\Gamma}
\renewcommand{\th}{\theta}
\renewcommand{\l}{\lambda}
\renewcommand{\L}{\Lambda}
\renewcommand{\O}{\Omega}
\newcommand{\s}{\sigma}

\newtheorem{theorem}{Theorem}
\newtheorem{acknowledgement}[theorem]{Acknowledgement}
\newtheorem{algorithm}[theorem]{Algorithm}
\newtheorem{axiom}[theorem]{Axiom}
\newtheorem{claim}[theorem]{Claim}
\newtheorem{conclusion}[theorem]{Conclusion}
\newtheorem{condition}[theorem]{Condition}
\newtheorem{conjecture}[theorem]{Conjecture}
\newtheorem{corollary}[theorem]{Corollary}
\newtheorem{criterion}[theorem]{Criterion}
\newtheorem{definition}[theorem]{Definition}
\newtheorem{example}[theorem]{Example}
\newtheorem{exercise}[theorem]{Exercise}
\newtheorem{lemma}[theorem]{Lemma}
\newtheorem{notation}[theorem]{Notation}
\newtheorem{problem}[theorem]{Problem}
\newtheorem{proposition}[theorem]{Proposition}
\newtheorem{remark}[theorem]{Remark}
\newtheorem{solution}[theorem]{Solution}
\newtheorem{summary}[theorem]{Summary}
\newenvironment{proof}[1][Proof]{\noindent\textbf{#1.} }{\ \rule{0.5em}{0.5em}}


\journal{DAM}

\begin{frontmatter}



\title{Optimal inequalities and extremal problems on the general  Sombor index}


\author[UAGRO]{Juan C. Hern\'andez}\ead{jcarloshg@gmail.com}
\author[UIII]{Jos\'e M. Rodr\'iguez}\ead{jomaro@math.uc3m.es}
\author[UAGRO]{Omar Rosario}\ead{omarrosarioc@gmail.com}
\author[UAGRO]{Jos\'e M. Sigarreta}\ead{josemariasigarretaalmira@hotmail.com}

\address[UAGRO]{Facultad de Matem\'aticas, Universidad Aut\'onoma de Guerrero,
Carlos E. Adame No.54 Col. Garita, 39650 Acalpulco Gro., Mexico}
\address[UIII]{Departamento de Matem\'aticas, Universidad Carlos III de Madrid,
 Avenida de la Universidad 30, 28911 Legan\'es, Madrid, Spain}

\begin{abstract}
In this work we obtain new lower and upper optimal bounds of general Sombor indices.
Specifically, we have inequalities for these indices
relating them with other indices: the first Zagreb index,
the forgotten index and the first variable Zagreb index.
Finally, we solve some extremal problems for general Sombor indices.
\end{abstract}

\begin{keyword}
Sombor indices \sep variable indices\sep degree-based topological indices \sep inequalities

\MSC 2020 \sep 05C09 \sep 05C92

\end{keyword}

\end{frontmatter}

\section{Introduction}

Topological indices have become an important research topic associated with the study of their mathematical and computational properties and, fundamentally, for their multiple applications to various areas of knowledge (see, e.g., \cite{Tod-09,Gut-14,Gut-18}). Within the study of mathematical properties, we will contribute to the study of inequalities and optimization problems associated with topological indices.
Our main goal are the Sombor indices, introduced by Gutman in \cite{Gut-21}.

In what follows, $G=\left( V\left( G\right) ,E\left( G\right) \right) $
will be a finite undirected graph, and we will assume that each vertex has at least a neighbor.
We denote by $d_{w}$ the degree of the vertex $w$, i.e., the number of neighbors of $w$.
We denote by $uv$ the edge joining the vertices $u$ and $v$ (or $v$ and $u$).
For each graph $G$, its
\emph{Sombor index} is
\[
SO\left( G\right) =\sum\limits_{uv\in E\left( G\right) }\sqrt{d_{u}^{2}+ d_{v}^{2}} \, .
\]
In the same paper is also defined the \emph{reduced Sombor index} by
\[
SO_{red}\left( G\right) =\sum\limits_{uv\in E\left( G\right) }\sqrt{\left(
d_{u}-1\right) ^{2}+\left( d_{v}-1\right) ^{2}}\,.
\]
In \cite{Red-21} it is shown that these indices have a good predictive potential.

Also, the \emph{modified Sombor index} of $G$ was proposed in \cite{KG21} as
\begin{equation}
^mSO(G)
= \sum_{uv\in E(G)} \frac{1}{\sqrt{d_u^2 + d_v^2}} .
\label{mSO}
\end{equation}
In addition, two other Sombor indices have been introduced:
the \emph{first Banhatti-Sombor index} \cite{LZKM21}
\begin{equation}
BSO(G)
= \sum_{uv\in E(G)} \sqrt{\frac{1}{d_u^2} + \frac{1}{d_v^2} }
\label{BSO}
\end{equation}
and the $\alpha$-\emph{Sombor index} \cite{RDA21}
\begin{equation}
SO_\alpha(G)
= \sum_{uv\in E(G)} (d_u^\alpha + d_v^\alpha)^{1/\alpha} ,
\label{pSO}
\end{equation}
here $\alpha \in \RR \setminus \{0\}$.
In fact, there is a general index that includes most Sombor indices listed above: the first $(\alpha,\beta)\,$--$\,KA$ \emph{index} of $G$
which was introduced in \cite{K19} as
\begin{equation}
KA_{\alpha,\beta}(G)
= KA^1_{\alpha,\beta}(G)
= \sum_{uv\in E(G)} \left( d_u^\alpha + d_v^\alpha \right)^\beta ,
\label{KA1}
\end{equation}
with $\alpha,\beta \in \RR$.
Note that $SO(G)=KA_{2,1/2}(G)$, $^mSO(G)=KA_{2,-1/2}(G)$, $BSO(G)=KA_{-2,1/2}(G)$, and
$SO_\alpha(G)=KA_{\alpha,1/\alpha}(G)$. Also, we note that $KA_{1,\beta}(G)$ equals the general
sum-connectivity index \cite{ZT10}
$
\chi_\beta(G) = \sum_{uv \in E(G)} (d_u + d_v)^\beta .
$
Reduced versions of $SO(G)$, $^mSO(G)$ and $KA_{\alpha,\beta}(G)$ were also introduced
in \cite{Gut-21,KG21,K21}, e.g., the reduced $(\alpha,\beta)\,$--$\,KA$ index is
$$
_{red}KA_{\alpha,\beta}(G)
= \sum_{uv\in E(G)} \big( (d_u-1)^\alpha + (d_v-1)^\alpha \big)^\beta .
$$
If $\a<0$, then $_{red}KA_{\alpha,\beta}(G)$ is just defined for graphs without pendant vertices
(recall that a vertex is said pendant if its degree is $1$).

In \cite{Gut-21}, Gutman initiates the study of the mathematical properties of $SO$.
Many papers have continued this study, see e.g.,
\cite{Cru-21}, \cite{Cru-21b}, \cite{Das-21}, \cite{Gut-21b}, \cite{Mil-21}, \cite{Rada-21}, \cite{RDA-21}.

Our main aim is to obtain new bounds of Sombor indices, and characterize the graphs where equality occurs.
In particular, we have obtained bounds for Somobor indices
relating them with the first Zagreb index,
the forgotten index and the first variable Zagreb index.
Also, we solve some extremal problems for Sombor indices.

\section{Inequalities for the Sombor indices}

The following inequalities are known for $x,y > 0$:
$$
\begin{aligned}
x^a + y^a
< (x + y)^a
\le 2^{a-1}(x^a + y^a )
\qquad & \text{if } \, a > 1,
\\
2^{a-1}(x^a + y^a )
\le (x + y)^a
< x^a + y^a
\qquad & \text{if } \, 0<a < 1,
\\
(x + y)^a
\le 2^{a-1}(x^a + y^a )
\qquad & \text{if } \, a < 0,
\end{aligned}
$$
and the equality in the second, third or fifth bound is tight for each $a$ if and only if $x=y$.

These inequalities allow to obtain the following result relating $KA$ indices.

\begin{theorem} \label{t:ineq1}
Let $G$ be any graph and $\a,\b,\l \in \RR \setminus \{0\}$.
Then
$$
\begin{aligned}
KA_{\a\b/\l,\,\l}(G)
< KA_{\a,\b}(G)
\le 2^{\b-\l}KA_{\a\b/\l,\,\l}(G)
\qquad & \text{if } \, \b>\l,\, \b\l > 0,
\\
2^{\b-\l}KA_{\a\b/\l,\,\l}(G)
\le KA_{\a,\b}(G)
< KA_{\a\b/\l,\,\l}(G)
\qquad & \text{if } \, \b<\l,\, \b\l > 0,
\\
KA_{\a,\b}(G)
\le 2^{\b-\l}KA_{\a\b/\l,\,\l}(G)
\qquad & \text{if } \, \b < 0, \l > 0,
\\
KA_{\a,\b}(G)
\ge 2^{\b-\l}KA_{\a\b/\l,\,\l}(G)
\qquad & \text{if } \, \b > 0, \l < 0,
\end{aligned}
$$
and the equality in the second, third, fifth or sixth bound is tight for each $\a,\b,\l$
if and only if $G$ has regular connected components.
\end{theorem}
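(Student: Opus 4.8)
The plan is to deduce each inequality, edge by edge, from the three preliminary inequalities stated just above the theorem, applied with $x=d_u^{\alpha\beta/\lambda}$, $y=d_v^{\alpha\beta/\lambda}$ and exponent $a=\lambda/\beta$ (every degree is at least $1$, so $x,y>0$ and these inequalities apply). With this choice one has $x^a+y^a=d_u^\alpha+d_v^\alpha$ and $(x+y)^a=\big(d_u^{\alpha\beta/\lambda}+d_v^{\alpha\beta/\lambda}\big)^{\lambda/\beta}$, hence, raising to the power $\beta$, $\big((x+y)^a\big)^\beta=\big(d_u^{\alpha\beta/\lambda}+d_v^{\alpha\beta/\lambda}\big)^{\lambda}$, $\big(x^a+y^a\big)^\beta=(d_u^\alpha+d_v^\alpha)^\beta$, and $\big(2^{a-1}(x^a+y^a)\big)^\beta=2^{\lambda-\beta}(d_u^\alpha+d_v^\alpha)^\beta$. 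Therefore, once the appropriate preliminary estimate between $(x+y)^a$, $x^a+y^a$ and $2^{a-1}(x^a+y^a)$ is raised to the power $\beta$ and summed over $E(G)$, it becomes exactly an inequality between $KA_{\alpha\beta/\lambda,\lambda}(G)$ and $KA_{\alpha,\beta}(G)$, with the constant $2^{\beta-\lambda}$ appearing after rearrangement.

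Which preliminary inequality to use, and whether the passage to the power $\beta$ preserves or reverses it, is dictated by the sign of $\beta$ and the value of $a=\lambda/\beta$. In the first line, $\beta>\lambda$ with $\beta\lambda>0$ splits into $\beta>\lambda>0$ (so $a\in(0,1)$: use the second preliminary inequality, then raise to the positive power $\beta$) and $\lambda<\beta<0$ (so $a>1$: use the first preliminary inequality, then raise to the negative power $\beta$, reversing the sense); both sub-cases produce $KA_{\alpha\beta/\lambda,\lambda}(G)<KA_{\alpha,\beta}(G)\le 2^{\beta-\lambda}KA_{\alpha\beta/\lambda,\lambda}(G)$. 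The second line is symmetric, with $a>1$ when $0<\beta<\lambda$ and $a\in(0,1)$ when $\beta<\lambda<0$. In the third and fourth lines $\beta$ and $\lambda$ have opposite signs, so $a=\lambda/\beta<0$ and the third preliminary inequality $(x+y)^a\le 2^{a-1}(x^a+y^a)$ applies; raising to the power $\beta$ keeps its sense when $\beta>0$ (giving the sixth bound) and reverses it when $\beta<0$ (giving the fifth bound).

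For the equality assertion, note that among the bounds listed, the second, third, fifth and sixth are precisely the ones obtained from the estimate carrying the constant $2^{a-1}$, whose equality case is $x=y$. Since $\alpha\beta/\lambda\neq 0$ and $d_u,d_v>0$, the condition $x=y$ is equivalent to $d_u=d_v$; summing over $E(G)$, equality in the corresponding $KA$ bound holds if and only if $d_u=d_v$ for every $uv\in E(G)$, i.e.\ if and only if every connected component of $G$ is regular. All the computations are elementary; the only delicate point is the bookkeeping in the middle step --- pairing each of the four cases (and the two sub-cases inside the first two) with the correct preliminary inequality and with the correct monotonicity of $t\mapsto t^\beta$.
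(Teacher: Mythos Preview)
Your proof is correct and follows essentially the same strategy as the paper's: apply the preliminary power-mean inequalities edge by edge and sum. The only cosmetic difference is that the paper makes the substitution $a=\beta/\lambda$, $x=d_u^{\alpha}$, $y=d_v^{\alpha}$ and then raises to the power $\lambda$, whereas you use the reciprocal choice $a=\lambda/\beta$, $x=d_u^{\alpha\beta/\lambda}$, $y=d_v^{\alpha\beta/\lambda}$ and raise to the power $\beta$; the resulting case analysis and equality discussion are identical.
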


\begin{proof}
If $a=\b/\l$, $x=d_u^{\a}$ and $y=d_v^{\a}$, then the previous inequalities give
$$
\begin{aligned}
d_u^{\a\b/\l} + d_v^{\a\b/\l}
< (d_u^{\a} + d_v^{\a})^{\b/\l}
\le 2^{\b/\l-1}(d_u^{\a\b/\l} + d_v^{\a\b/\l} )
\qquad & \text{if } \, \b/\l > 1,
\\
2^{\b/\l-1}(d_u^{\a\b/\l} + d_v^{\a\b/\l} )
\le (d_u^{\a} + d_v^{\a})^{\b/\l}
< d_u^{\a\b/\l} + d_v^{\a\b/\l}
\qquad & \text{if } \, 0 < \b/\l <1,
\\
(d_u^{\a} + d_v^{\a})^{\b/\l}
\le 2^{\b/\l-1}(d_u^{\a\b/\l} + d_v^{\a\b/\l} )
\qquad & \text{if } \, \b/\l < 0,
\end{aligned}
$$
and the equality in the second, third or fifth bound is tight if and only if $d_u=d_v$.

Hence, we obtain
$$
\begin{aligned}
(d_u^{\a\b/\l} + d_v^{\a\b/\l})^{\l}
< (d_u^{\a} + d_v^{\a})^{\b}
\le 2^{\b-\l}(d_u^{\a\b/\l} + d_v^{\a\b/\l} )^{\l}
\qquad & \text{if } \, \b/\l > 1, \,\l>0,
\\
2^{\b-\l}(d_u^{\a\b/\l} + d_v^{\a\b/\l} )^{\l}
\le (d_u^{\a} + d_v^{\a})^{\b}
< (d_u^{\a\b/\l} + d_v^{\a\b/\l})^{\l}
\qquad & \text{if } \, \b/\l > 1, \,\l<0,
\\
2^{\b-\l}(d_u^{\a\b/\l} + d_v^{\a\b/\l} )^{\l}
\le (d_u^{\a} + d_v^{\a})^{\b}
< (d_u^{\a\b/\l} + d_v^{\a\b/\l})^{\l}
\qquad & \text{if } \, 0 < \b/\l <1, \, \l>0,
\\
(d_u^{\a\b/\l} + d_v^{\a\b/\l})^{\l}
< (d_u^{\a} + d_v^{\a})^{\b}
\le 2^{\b-\l}(d_u^{\a\b/\l} + d_v^{\a\b/\l} )^{\l}
\qquad & \text{if } \, 0 < \b/\l <1, \, \l<0,
\\
(d_u^{\a} + d_v^{\a})^{\b}
\le 2^{\b-\l}(d_u^{\a\b/\l} + d_v^{\a\b/\l} )^{\l}
\qquad & \text{if } \, \b < 0, \l > 0,
\\
(d_u^{\a} + d_v^{\a})^{\b}
\ge 2^{\b-\l}(d_u^{\a\b/\l} + d_v^{\a\b/\l} )^{\l}
\qquad & \text{if } \, \b > 0, \l < 0,
\end{aligned}
$$
and the equality in the non-strict inequalities is tight if and only if $d_u=d_v$.

If we sum on $uv \in E(G)$ these inequalities, then we obtain $(1)$.
\end{proof}

\begin{remark}
Note that the excluded case $\b=\l$ in Theorem \ref{t:ineq1} is not interesting, since
$KA_{\a\b/\l,\,\l}(G) = KA_{\a,\b}(G)$ if $\b=\l$.
\end{remark}

The argument in the proof of Theorem \ref{t:ineq1} also allows to obtain the following result relating reduced $KA$ indices.

\begin{theorem} \label{t:ineq2}
Let $G$ be any graph and $\a,\b,\l \in \RR \setminus \{0\}$.
If $\a<0$ or $\,\a\b\l < 0$, we also assume that $G$ does not have pendant vertices.
Then
$$
\begin{aligned}
{}_{red}KA_{\a\b/\l,\,\l}(G)
< {}_{red}KA_{\a,\b}(G)
\le 2^{\b-\l} {}_{red}KA_{\a\b/\l,\,\l}(G)
\qquad & \text{if } \, \b>\l, \, \b\l>0,
\\
2^{\b-\l} {}_{red}KA_{\a\b/\l,\,\l}(G)
\le {}_{red}KA_{\a,\b}(G)
< {}_{red}KA_{\a\b/\l,\,\l}(G)
\qquad & \text{if } \, \b<\l, \, \b\l>0,
\\
{}_{red}KA_{\a,\b}(G)
\le 2^{\b-\l} {}_{red}KA_{\a\b/\l,\,\l}(G)
\qquad & \text{if } \, \b < 0, \l > 0,
\\
{}_{red}KA_{\a,\b}(G)
\ge 2^{\b-\l} {}_{red}KA_{\a\b/\l,\,\l}(G)
\qquad & \text{if } \, \b > 0, \l < 0,
\end{aligned}
$$
and the equality in the second, third, fifth or sixth bound is tight for each $\a,\b,\l$
if and only if $G$ has regular connected components.
\end{theorem}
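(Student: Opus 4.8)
The plan is to repeat the proof of Theorem~\ref{t:ineq1} essentially word for word, with $d_u$ and $d_v$ replaced throughout by $d_u-1$ and $d_v-1$. Thus I would apply the elementary inequalities recalled just before Theorem~\ref{t:ineq1} with $a=\b/\l$, $x=(d_u-1)^{\a}$ and $y=(d_v-1)^{\a}$. The hypothesis on pendant vertices is exactly what makes this legitimate: the two bases $(d_w-1)^{\a}$ and $(d_w-1)^{\a\b/\l}$ that actually occur (in ${}_{red}KA_{\a,\b}$ and in ${}_{red}KA_{\a\b/\l,\,\l}$, respectively) are undefined or infinite at a vertex $w$ of degree $1$ precisely when $\a<0$ or $\a\b/\l<0$, and $\a\b/\l<0$ is equivalent to $\a\b\l<0$; outside these cases every such base is a nonnegative real, and is strictly positive as soon as $d_w\ge 2$.

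The key steps are then the same three as in the proof of Theorem~\ref{t:ineq1}. First, for each $uv\in E(G)$ the elementary inequalities with the above $x,y,a$ produce the per-edge chain relating $(d_u-1)^{\a\b/\l}+(d_v-1)^{\a\b/\l}$, $\big((d_u-1)^{\a}+(d_v-1)^{\a}\big)^{\b/\l}$ and $2^{\b/\l-1}\big((d_u-1)^{\a\b/\l}+(d_v-1)^{\a\b/\l}\big)$, in the three regimes $\b/\l>1$, $0<\b/\l<1$, $\b/\l<0$. Second, raising to the power $\l$ (which preserves the inequalities for $\l>0$ and reverses them for $\l<0$) and using $\big(2^{\b/\l-1}\big)^{\l}=2^{\b-\l}$, one obtains six per-edge inequalities which regroup into the four displayed cases according to the signs of $\b,\l$ and the ordering of $\b$ and $\l$, exactly as in Theorem~\ref{t:ineq1}. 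Third, summing over $uv\in E(G)$ yields the four asserted bounds.

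For the equality statement, equality in a summed non-strict bound forces equality in the corresponding per-edge bound for every $uv\in E(G)$; by the elementary inequalities, for strictly positive bases this forces $(d_u-1)^{\a}=(d_v-1)^{\a}$, i.e. $d_u=d_v$, on each edge, and ``every edge joins vertices of equal degree'' is equivalent to ``every connected component of $G$ is regular'' (the converse being immediate by substitution). I expect the only genuinely new difficulty, compared with Theorem~\ref{t:ineq1}, to be the two cases in which pendant vertices are permitted: there $\b/\l>0$, so the elementary inequalities extend to $x,y\ge 0$ with the convention $0^{\b/\l}=0$, but an edge meeting a pendant vertex then makes one of the two elementary inequalities an equality rather than a strict inequality; handling this correctly — in particular re-checking the strict bounds and the regular-components characterization, or restricting attention to graphs in which no edge meets a pendant vertex — is the delicate point, everything else transferring verbatim from Theorem~\ref{t:ineq1}.
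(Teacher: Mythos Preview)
Your approach is exactly the paper's: the paper gives no separate proof, stating only that ``the argument in the proof of Theorem~\ref{t:ineq1} also allows to obtain the following result relating reduced $KA$ indices.'' Your added scrutiny of the pendant-vertex case is in fact more careful than the paper's treatment; the concern you flag is genuine, since e.g.\ with $\a=1$, $\b=2$, $\l=1$ one has ${}_{red}KA_{1,2}(K_{1,n})={}_{red}KA_{2,1}(K_{1,n})=n(n-1)^2$, so the first strict inequality fails for stars---a point the paper passes over.
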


If we take $\b=1/\a$ and $\mu=1/\l$ in Theorem \ref{t:ineq1}, we obtain the following inequalities for
the $\a$-Sombor index.

\begin{corollary} \label{c:ineq1}
Let $G$ be any graph and $\a,\mu \in \RR \setminus \{0\}$.
Then
$$
\begin{aligned}
SO_{\mu}(G)
< SO_{\a}(G)
\le 2^{1/\a-1/\mu} SO_{\mu}(G)
\qquad & \text{if } \, \mu>\a, \, \a\mu>0,
\\
2^{1/\a-1/\mu} SO_{\mu}(G)
\le SO_{\a}(G)
< SO_{\mu}(G)
\qquad & \text{if } \, \mu<\a, \, \a\mu>0,
\\
SO_{\a}(G)
\le 2^{1/\a-1/\mu} SO_{\mu}(G)
\qquad & \text{if } \, \a < 0, \mu > 0,
\end{aligned}
$$
and the equality in the second, third or fifth bound is tight for each $\a,\mu$
if and only if $G$ has regular connected components.
\end{corollary}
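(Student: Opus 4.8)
The plan is to derive Corollary \ref{c:ineq1} directly from Theorem \ref{t:ineq1} by the substitution already advertised in the text, namely $\b = 1/\a$ and $\l = 1/\mu$ (the statement's ``$\mu$'' plays the role of Theorem \ref{t:ineq1}'s ``$\l$'' via $\l = 1/\mu$). First I would record that, under this substitution, $KA_{\a,\b}(G) = KA_{\a,1/\a}(G) = SO_\a(G)$ by the identity $SO_\a(G) = KA_{\a,1/\a}(G)$ noted in the Introduction. Likewise the other index appearing in Theorem \ref{t:ineq1} becomes $KA_{\a\b/\l,\,\l}(G) = KA_{(\a/\a)\cdot\mu,\,1/\mu}(G) = KA_{\mu,\,1/\mu}(G) = SO_\mu(G)$, since $\a\b/\l = \a \cdot (1/\a) \cdot \mu = \mu$. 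So each of the four inequalities in Theorem \ref{t:ineq1} translates into an inequality between $SO_\a(G)$ and $SO_\mu(G)$, with the prefactor $2^{\b-\l} = 2^{1/\a - 1/\mu}$, exactly as displayed.

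Next I would translate the hypotheses. In Theorem \ref{t:ineq1} the cases are governed by the signs and order of $\b$ and $\l$; here $\b = 1/\a$ and $\l = 1/\mu$, so $\b\l > 0 \iff \a\mu > 0$, and when $\a,\mu$ have the same sign, $\b > \l \iff 1/\a > 1/\mu \iff \mu > \a$ (both reciprocals being taken of numbers of equal sign), while $\b < \l \iff \mu < \a$. This gives the first two lines of the corollary. For the third line of Theorem \ref{t:ineq1}, the hypothesis $\b < 0,\ \l > 0$ becomes $\a < 0,\ \mu > 0$, yielding $SO_\a(G) \le 2^{1/\a - 1/\mu} SO_\mu(G)$, which is the corollary's third line. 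The remaining case of Theorem \ref{t:ineq1} ($\b > 0,\ \l < 0$, i.e. $\a > 0,\ \mu < 0$) simply does not appear in the corollary's statement — one may remark it would give a ``$\ge$'' bound, but there is nothing to prove there. Finally, the equality-case assertion transfers verbatim: the equality in the second, third or fifth bound of Theorem \ref{t:ineq1} holds iff $G$ has regular connected components, and the same characterization therefore holds for the corresponding bounds of the corollary.

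There is essentially no obstacle here — the proof is a one-line appeal to Theorem \ref{t:ineq1} followed by a bookkeeping check that the parameter dictionary $\b = 1/\a$, $\l = 1/\mu$ sends each hypothesis and each conclusion to the asserted one. The only point requiring a moment's care is the monotonicity reversal of $t \mapsto 1/t$, which is why the order ``$\b > \l$'' becomes ``$\mu > \a$'' rather than ``$\a > \mu$''; I would state that reversal explicitly (valid because in those two cases $\a$ and $\mu$ share a sign) so the reader sees why the inequalities flip as they do. I would write the argument as a short paragraph rather than a displayed computation, since every displayed line would just be the corresponding line of Theorem \ref{t:ineq1} with the substitution performed.

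\begin{proof}
Take $\b = 1/\a$ and $\l = 1/\mu$ in Theorem \ref{t:ineq1}; this is allowed since $\a,\mu \in \RR \setminus \{0\}$ forces $\b,\l \in \RR\setminus\{0\}$.
Then $KA_{\a,\b}(G) = KA_{\a,1/\a}(G) = SO_{\a}(G)$ and, since $\a\b/\l = \a\cdot(1/\a)\cdot\mu = \mu$,
$$
KA_{\a\b/\l,\,\l}(G) = KA_{\mu,\,1/\mu}(G) = SO_{\mu}(G),
$$
while the prefactor is $2^{\b-\l} = 2^{1/\a - 1/\mu}$.
Moreover $\b\l = 1/(\a\mu)$, so $\b\l > 0 \iff \a\mu > 0$; and when $\a,\mu$ have the same sign, $t \mapsto 1/t$ is decreasing on each half-line containing both, so $\b > \l \iff 1/\a > 1/\mu \iff \mu > \a$ and $\b < \l \iff \mu < \a$.
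Also $\b < 0,\ \l > 0 \iff \a < 0,\ \mu > 0$.
Substituting these identifications into the first, second and third lines of Theorem \ref{t:ineq1} gives precisely the three displayed chains of inequalities of the corollary.
(The fourth line of Theorem \ref{t:ineq1}, corresponding to $\a > 0,\ \mu < 0$, is not listed.)
Finally, the equality statement of Theorem \ref{t:ineq1} says the equality in the second, third or fifth bound there holds for each choice of parameters if and only if $G$ has regular connected components; under the above substitution these are exactly the second, third and fifth bounds of the corollary, so the same characterization holds.
\end{proof}
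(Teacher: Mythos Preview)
Your proposal is correct and follows exactly the same approach as the paper, which simply notes that taking $\b=1/\a$ and $\mu=1/\l$ (equivalently, $\l=1/\mu$) in Theorem~\ref{t:ineq1} yields the corollary. Your version merely spells out the bookkeeping of this substitution in more detail than the paper does.
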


Recall that one of the most studied topological indices is the \emph{first Zagreb index}, defined by
$$
M_1(G) = \sum_{u\in V(G)} d_u^2 .
$$

If we take $\mu=1$ in Corollary \ref{c:ineq1}, we obtain the following result.
\begin{corollary} \label{c:ineq1cuatris}
Let $G$ be any graph and $\a \in \RR \setminus \{0\}$.
Then
$$
\begin{aligned}
M_1(G)
< SO_{\a}(G)
\le 2^{1/\a-1} M_1(G)
\qquad & \text{if } \, 0 < \a < 1,
\\
2^{1/\a-1} M_1(G)
\le SO_{\a}(G)
< M_1(G)
\qquad & \text{if } \, \a >1,
\\
SO_{\a}(G)
\le 2^{1/\a-1} M_1(G)
\qquad & \text{if } \, \a < 0,
\end{aligned}
$$
and the equality in the second, third or fifth bound is tight for each $\a$
if and only if $G$ has regular connected components.
\end{corollary}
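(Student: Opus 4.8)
The plan is to specialize Corollary \ref{c:ineq1} to the case $\mu = 1$, after identifying $SO_1(G)$ with $M_1(G)$.

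First I would compute $SO_1(G)$. Taking $\a = 1$ in the definition \eqref{pSO} gives
$$
SO_1(G) = \sum_{uv\in E(G)} \big( d_u + d_v \big).
$$
Then I would invoke the classical double-counting identity $\sum_{uv\in E(G)} (d_u + d_v) = \sum_{w\in V(G)} d_w^2 = M_1(G)$: in the left-hand sum each vertex $w$ is counted once for each of its $d_w$ incident edges and contributes $d_w$ on each occurrence, so its total contribution is $d_w^2$. Hence $SO_1(G) = M_1(G)$.

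Next I would put $\mu = 1$ throughout Corollary \ref{c:ineq1}. The first hypothesis $\mu > \a,\ \a\mu > 0$ becomes $0 < \a < 1$; the second $\mu < \a,\ \a\mu > 0$ becomes $\a > 1$; the third $\a < 0,\ \mu > 0$ becomes $\a < 0$. Since $2^{1/\a - 1/\mu} = 2^{1/\a - 1}$ and $SO_{\mu}(G) = SO_1(G) = M_1(G)$, the three chains of inequalities in Corollary \ref{c:ineq1} turn into exactly the three displayed chains of the present statement, and the equality condition (that $G$ has regular connected components) carries over verbatim.

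I do not expect any real obstacle here: the only substantive point is the identity $SO_1(G) = M_1(G)$, which is elementary, and everything else is a direct substitution into Corollary \ref{c:ineq1}.
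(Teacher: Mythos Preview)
Your proposal is correct and follows exactly the approach indicated in the paper, which simply states that the result is obtained by taking $\mu=1$ in Corollary~\ref{c:ineq1}. Your added justification that $SO_1(G)=\sum_{uv\in E(G)}(d_u+d_v)=\sum_{w\in V(G)}d_w^2=M_1(G)$ is the only detail needed to make the specialization explicit, and it is handled correctly.
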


If we take $\a=2$, $\b=-1/2$ and $\l=1/2$ in Theorem \ref{t:ineq1}, we obtain the following inequality
relating the modified Sombor and the first Banhatti-Sombor indices.

\begin{corollary} \label{c:ineq1bis}
Let $G$ be any graph.
Then
$$
^m SO(G)
\le \frac12\, BSO(G)
$$
and the bound is tight if and only if $G$ has regular connected components.
\end{corollary}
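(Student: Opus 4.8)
The plan is to apply Theorem~\ref{t:ineq1} directly, with the parameter choice $\a=2$, $\b=-\frac12$, $\l=\frac12$. First I would recall from the Introduction the identifications $^mSO(G)=KA_{2,-1/2}(G)$ and $BSO(G)=KA_{-2,1/2}(G)$. Next I compute $\a\b/\l = 2\cdot(-\frac12)/\frac12 = -2$, so that $KA_{\a\b/\l,\,\l}(G)=KA_{-2,1/2}(G)=BSO(G)$ while $KA_{\a,\b}(G)=KA_{2,-1/2}(G)={}^mSO(G)$.

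Then I would check which branch of Theorem~\ref{t:ineq1} is active for this choice: here $\b=-\frac12<0$ and $\l=\frac12>0$, which is exactly the hypothesis $\b<0$, $\l>0$ of the fifth displayed bound of that theorem. Reading off that bound gives
$$
^mSO(G) = KA_{2,-1/2}(G) \le 2^{\b-\l}\,KA_{-2,1/2}(G) = 2^{-1/2-1/2}\,BSO(G) = \frac12\, BSO(G),
$$
which is the asserted inequality. (Every quantity involved is finite: since each vertex of $G$ has a neighbour we have $d_u\ge 1$, so the summands of $BSO$ are well defined.)

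For the equality statement I would invoke the final sentence of Theorem~\ref{t:ineq1}: the inequality just used is precisely its ``fifth bound'', and equality there holds for the given $\a,\b,\l$ if and only if $G$ has regular connected components; this transfers verbatim to the present corollary. Since the result is an immediate specialization of an already proved theorem, there is essentially no obstacle here; the only points needing a moment's care are computing $\a\b/\l$ correctly and matching the signs of $\b$ and $\l$ to the correct case of Theorem~\ref{t:ineq1}.
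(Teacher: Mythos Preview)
Your proposal is correct and matches the paper's own approach exactly: the paper derives this corollary simply by taking $\alpha=2$, $\beta=-1/2$, $\lambda=1/2$ in Theorem~\ref{t:ineq1}, just as you do. Your verification of $\alpha\beta/\lambda=-2$, of the case $\beta<0$, $\lambda>0$, and of the equality characterization is accurate.
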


In \cite{LZheng,LZhao,MN}, the \emph{first variable Zagreb index} is defined by
$$
M_1^{\a}(G) = \sum_{u\in V(G)} d_u^{\a} ,
$$
with $\a \in \mathbb{R}$.

Note that $M_1^{\a}$ generalizes numerous degree--based topological
indices which earlier have independently been studied.
For $\a=2$, $\a=3$, $\a=-1/2$, and $\a=-1$, \ $M_1^{\a}$ is, respectively, the ordinary first Zagreb index, the forgotten index $F$,
the zeroth--order Randi\'c index, and the inverse index $ID$ \cite{Gut-14,AGMM}.

The next result relates the $KA_{\a,\b}$ and $M_1^{\a+1}$ indices.

\begin{theorem} \label{t:gm1}
Let $G$ be any graph with maximum degree $\D$, minimum degree $\d$ and $m$ edges,
and $\a \in \RR \setminus \{0\}$, $\b > 0$.
Then
$$
\begin{aligned}
KA_{\a,\b}(G)
\ge \Big(
\frac{ M_1^{\a+1}(G) + 2 \D^{\a/2} \d^{\a/2} m}{ \sqrt{2}\, (\D^{\a/2} + \d^{\a/2})} \Big)^{2\b}
\qquad & \text{if } \, 0< \b < 1/2 ,
\\
KA_{\a,\b}(G)
\ge \Big(
\frac{ M_1^{\a+1}(G) + 2 \D^{\a/2} \d^{\a/2} m}{ \sqrt{2}\, (\D^{\a/2} + \d^{\a/2})} \Big)^{2\b} m^{1-2\b}
\qquad & \text{if } \, \b \ge 1/2 ,
\end{aligned}
$$
and the equality in the second bound is tight for some $\a,\b$
if and only if $G$ is a regular graph.
\end{theorem}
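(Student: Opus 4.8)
The plan is to reduce both inequalities to a single lower bound for $\sum_{uv\in E(G)}\sqrt{d_u^{\a}+d_v^{\a}}$, obtained from an elementary reverse Cauchy--Schwarz estimate, and then to finish with convexity. First I would record the handshake-type identity
$$
M_1^{\a+1}(G)=\sum_{u\in V(G)}d_u\,d_u^{\a}=\sum_{uv\in E(G)}\big(d_u^{\a}+d_v^{\a}\big),
$$
and set $s_{uv}:=\sqrt{d_u^{\a}+d_v^{\a}}>0$ for each edge $uv$. Then $\sum_{uv}s_{uv}^{2}=M_1^{\a+1}(G)$, $\sum_{uv}s_{uv}^{2\b}=KA_{\a,\b}(G)$, the sums have $m$ terms, and, writing $R:=\frac{M_1^{\a+1}(G)+2\D^{\a/2}\d^{\a/2}m}{\sqrt2\,(\D^{\a/2}+\d^{\a/2})}$, the two right-hand sides in the statement are exactly $R^{2\b}$ and $m^{1-2\b}R^{2\b}$.

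Next I would bound each $s_{uv}$. Since $\d\le d_u,d_v\le\D$, the number $d_u^{\a}+d_v^{\a}$ lies between $2\d^{\a}$ and $2\D^{\a}$ (in the first order if $\a>0$, in the reverse order if $\a<0$), so $s_{uv}$ lies between $a:=\sqrt2\,\min\{\d^{\a/2},\D^{\a/2}\}$ and $A:=\sqrt2\,\max\{\d^{\a/2},\D^{\a/2}\}$; in both cases $aA=2\D^{\a/2}\d^{\a/2}$ and $a+A=\sqrt2\,(\D^{\a/2}+\d^{\a/2})$. From $\sum_{uv\in E(G)}(s_{uv}-a)(A-s_{uv})\ge 0$ (each factor is nonnegative) and $\sum_{uv}s_{uv}^{2}=M_1^{\a+1}(G)$ I then get
$$
m\,aA+M_1^{\a+1}(G)\le (a+A)\sum_{uv\in E(G)}s_{uv},
\qquad\text{i.e.}\qquad
\sum_{uv\in E(G)}s_{uv}\ \ge\ R .
$$

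With this in hand, the two cases are immediate. If $0<\b<1/2$, then $0<2\b<1$ and the subadditivity $\big(\sum_{uv}s_{uv}\big)^{2\b}\le\sum_{uv}s_{uv}^{2\b}$ of the concave power, together with monotonicity of $t\mapsto t^{2\b}$, gives $KA_{\a,\b}(G)=\sum_{uv}s_{uv}^{2\b}\ge\big(\sum_{uv}s_{uv}\big)^{2\b}\ge R^{2\b}$. If $\b\ge1/2$, then $2\b\ge1$ and Jensen's inequality for the convex function $t\mapsto t^{2\b}$ gives $\frac1m\sum_{uv}s_{uv}^{2\b}\ge\big(\frac1m\sum_{uv}s_{uv}\big)^{2\b}$, hence $KA_{\a,\b}(G)\ge m^{1-2\b}\big(\sum_{uv}s_{uv}\big)^{2\b}\ge m^{1-2\b}R^{2\b}$.

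For equality in the second bound I would combine the two equality cases used: equality in $\sum_{uv}(s_{uv}-a)(A-s_{uv})\ge0$ forces every $s_{uv}\in\{a,A\}$, i.e. every edge joins two vertices of degree $\d$ or two vertices of degree $\D$; and equality in Jensen (when $2\b>1$) forces all $s_{uv}$ equal. Since every vertex has a neighbour, the combination forces $G$ to be regular. Conversely, if $G$ is $d$-regular then $M_1^{\a+1}(G)=2md^{\a}$, whence $R=\sqrt2\,md^{\a/2}$ and both sides of the second inequality equal $2^{\b}md^{\a\b}$. The only step that is not a routine manipulation is spotting that everything hinges on the lower bound $\sum_{uv}s_{uv}\ge R$; once one thinks of testing $\sum_{uv}(s_{uv}-a)(A-s_{uv})\ge0$ with $a,A$ the extreme possible values of $\sqrt{d_u^{\a}+d_v^{\a}}$, the remainder (uniform treatment of the sign of $\a$, and the concave/convex split in $\b$) is straightforward.
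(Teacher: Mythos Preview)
Your proof is correct and is essentially the paper's own argument: the key step is the same Polya--Szeg\H{o}-type trick $\sum_{uv}(s_{uv}-a)(A-s_{uv})\ge 0$ to get $\sum_{uv}s_{uv}\ge R$, followed by the same concave/convex power split (the paper phrases the $\b\ge 1/2$ case via H\"older rather than Jensen, but the two are equivalent here). Your uniform handling of the sign of $\a$ via $a=\min$, $A=\max$ and your equality discussion match the paper's reasoning.
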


\begin{proof}
If $uv \in E(G)$ and $\a>0$, then
$$
\sqrt{2}\, \d^{\a/2}
\le \sqrt{d_u^{\a} + d_v^{\a}}
\le \sqrt{2}\, \D^{\a/2} .
$$
If $\a<0$, then the converse inequalities hold.
Hence,
$$
\begin{aligned}
\Big( \sqrt{d_u^{\a} + d_v^{\a}} - \sqrt{2}\, \d^{\a/2} \Big)
\Big( \sqrt{2}\, \D^{\a/2} - \sqrt{d_u^{\a} + d_v^{\a}} \, \Big)
\ge 0 ,
\\
\sqrt{2}\, (\D^{\a/2} + \d^{\a/2}) \sqrt{d_u^{\a} + d_v^{\a}}
\ge d_u^{\a} + d_v^{\a} + 2 \D^{\a/2} \d^{\a/2}.
\end{aligned}
$$
Since
$$
\sum_{uv \in E(G)} \big( d_u^{\a} + d_v^{\a} \big)
= \sum_{u \in V(G)} d_u\, d_u^{\a}
= \sum_{u \in V(G)} d_u^{\a+1}
= M_1^{\a+1}(G),
$$
If $0< \b < 1/2$, then $1/(2\b) > 1$ and
$$
\begin{aligned}
\sum_{uv \in E(G)} \sqrt{d_u^{\a} + d_v^{\a}}
& = \sum_{uv \in E(G)} \big( (d_u^{\a} + d_v^{\a})^\b \big)^{1/(2\b)}
\\
& \le \Big( \sum_{uv \in E(G)} (d_u^{\a} + d_v^{\a})^\b \Big)^{1/(2\b)}
= KA_{\a,\b}(G)^{1/(2\b)}.
\end{aligned}
$$
Consequently, we obtain
$$
KA_{\a,\b}(G)^{1/(2\b)}
\ge \frac{ M_1^{\a+1}(G) + 2 \D^{\a/2} \d^{\a/2} m}{ \sqrt{2}\, (\D^{\a/2} + \d^{\a/2})} \,.
$$
If $\b \ge 1/2$, then $2\b \ge 1$ and H\"older inequality gives
$$
\begin{aligned}
\sum_{uv \in E(G)} \sqrt{d_u^{\a} + d_v^{\a}}
& = \sum_{uv \in E(G)} \big( (d_u^{\a} + d_v^{\a})^\b \big)^{1/(2\b)}
\\
& \le \Big( \sum_{uv \in E(G)} (d_u^{\a} + d_v^{\a})^\b \Big)^{1/(2\b)}
\Big( \sum_{uv \in E(G)} 1^{2\b/(2\b-1)} \Big)^{(2\b-1)/(2\b)}
\\
& = m^{(2\b-1)/(2\b)}  KA_{\a,\b}(G)^{1/(2\b)}.
\end{aligned}
$$
Consequently, we obtain
$$
KA_{\a,\b}(G)^{1/(2\b)}
\ge \frac{ M_1^{\a+1}(G) + 2 \D^{\a/2} \d^{\a/2} m}{ \sqrt{2}\, (\D^{\a/2} + \d^{\a/2})} \, m^{(1-2\b)/(2\b)}.
$$
If $G$ is regular, then
$$
\begin{aligned}
\Big( \frac{ M_1^{\a+1}(G) + 2 \D^{\a/2} \d^{\a/2} m}{ \sqrt{2}\, (\D^{\a/2} + \d^{\a/2})} \Big)^{2\b} m^{1-2\b}
& = \Big( \frac{ 2\D^{\a}m + 2 \D^{\a} m}{ \sqrt{2}\, 2\D^{\a/2}} \Big)^{2\b} m^{1-2\b}
\\
& = \Big( \sqrt{2}\, \D^{\a/2} m \Big)^{2\b} m^{1-2\b}
\\
& = \big( 2 \D^{\a} \big)^{\b} m
= KA_{\a,\b}(G) .
\end{aligned}
$$

If the equality in the second bound is tight for some $\a,\b,$
then we have $d_u^\a+d_v^\a=2\d^\a$ or $d_u^\a+d_v^\a=2\D^\a$ for each $uv \in E(G)$.
Also, the equality in H\"older inequality gives that there exists a constant $c$ such that
$d_u^\a+d_v^\a=c$ for every $uv \in E(G)$.
Hence, we have either
$d_u^\a+d_v^\a=2\d^\a$ for each edge $uv$ or
$d_u^\a+d_v^\a=2\D^\a$ for each edge $uv$,
and hence, $G$ is regular.
\end{proof}

If we take $\a=2$ and $\b=1/2$ in Theorem \ref{t:gm1} we obtain:

\begin{corollary} \label{c:gm1}
Let $G$ be any graph with maximum degree $\D$, minimum degree $\d$ and $m$ edges.
Then
$$
\begin{aligned}
SO(G)
\ge \frac{ F(G) + 2 \D \d m}{ \sqrt{2}\, (\D + \d)} \,,
\end{aligned}
$$
and the bound is tight if and only if $G$ is regular.
\end{corollary}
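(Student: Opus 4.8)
The plan is to obtain the corollary as a direct specialization of Theorem~\ref{t:gm1}, so essentially no new work is required. I would set $\a=2$ and $\b=1/2$. Then $\a/2=1$, hence $\D^{\a/2}=\D$ and $\d^{\a/2}=\d$, and $M_1^{\a+1}(G)=M_1^{3}(G)$, which is precisely the forgotten index $F(G)$ (as recalled just above the corollary). Moreover $KA_{2,1/2}(G)=SO(G)$ by the identity noted in the Introduction, and the value $\b=1/2$ falls in the branch $\b\ge 1/2$ of Theorem~\ref{t:gm1}. Applying the second inequality there gives
$$
SO(G)=KA_{2,1/2}(G)
\ge \Big( \frac{ F(G) + 2 \D \d m}{ \sqrt{2}\, (\D + \d)} \Big)^{2\b} m^{1-2\b},
$$
and since $2\b=1$ the exponent on the parenthesis is $1$ and $m^{1-2\b}=m^{0}=1$, which is exactly the claimed bound.

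For the equality statement, I would invoke directly the corresponding assertion of Theorem~\ref{t:gm1}: equality in its second bound holds (for the given $\a,\b$) if and only if $G$ is regular. Since $\b=1/2$ is covered by that branch, the characterization transfers verbatim. For completeness one checks the easy direction by hand: if $G$ is $k$-regular with $m=nk/2$ edges, then $SO(G)=\sqrt{2}\,k m$, while $F(G)=nk^{3}=2mk^{2}$ and $\D=\d=k$, so the right-hand side equals $(2mk^{2}+2mk^{2})/(2\sqrt{2}\,k)=\sqrt{2}\,k m$, confirming equality.

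There is no genuine obstacle here; the only points requiring a little care are the bookkeeping of the exponents $\a/2$ and $2\b$, the recognition that $M_1^{3}=F$, and the observation that the boundary value $\b=1/2$ belongs to the $\b\ge 1/2$ case of Theorem~\ref{t:gm1} (so that the factor $m^{1-2\b}$, which equals $1$ here, appears and, crucially, the equality characterization is the one available). Alternatively, should one prefer a self-contained argument, the same bound follows by reproducing the proof of Theorem~\ref{t:gm1} directly with these parameters, using the Cauchy--Schwarz inequality in place of the general H\"older step.
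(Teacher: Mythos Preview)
Your proposal is correct and matches the paper's approach exactly: the corollary is obtained by specializing Theorem~\ref{t:gm1} with $\alpha=2$ and $\beta=1/2$, using $KA_{2,1/2}=SO$, $M_1^{3}=F$, and $m^{1-2\beta}=1$. The equality characterization likewise transfers directly from the $\beta\ge 1/2$ branch of the theorem, just as you note.
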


In order to prove Theorem \ref{t:holder} below we need an additional technical result.
In \cite[Theorem 3]{BMRS} appears a converse of H\"older inequality, which in the discrete case can be stated as follows \cite[Corollay 2]{BMRS}.

\begin{proposition} \label{c:holder}
Consider constants $0<\a \le \b$ and $1<p,q<\infty$ with $1/p+1/q=1$.
If $w_k,z_k\ge 0$ satisfy $\a z_k^q \le w_k^p \le \b z_k^q$ for $1\le k \le n$, then
	$$
	\Big(\sum_{k=1}^n w_k^p \Big)^{1/p} \Big(\sum_{k=1}^n z_k^q \Big)^{1/q}
	\le C_p(\a ,\b  ) \sum_{k=1}^n w_kz_k ,
	$$
where
	$$
	C_p(\a ,\b  )
	= \begin{cases}
		\,\displaystyle\frac{1}{p} \Big( \frac{\a }{\b  } \Big)^{1/(2q)} + \frac{1}{q} \Big( \frac{\b  }{\a } \Big)^{1/(2p)}\,,
& \quad \text{when } 1<p<2 ,
		\\
		\, & \,
		\\
		\,\displaystyle\frac{1}{p} \Big( \frac{\b  }{\a } \Big)^{1/(2q)} + \frac{1}{q} \Big( \frac{\a }{\b  } \Big)^{1/(2p)}\,,
& \quad \text{when } p \ge 2 .
	\end{cases}
	$$
	If $(w_1,\dots,w_n) \neq 0$, then the bound is tight if and only if $w_k^p=\a z_k^q$
for each $1\le k \le n$ and $\a =\b$.
\end{proposition}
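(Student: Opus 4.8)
The plan is to reduce the inequality to a one–variable statement about the concave power $\xi\mapsto\xi^{1/p}$ and then exploit the tangent line to this function at the geometric mean $\g:=\sqrt{\a\b}$; the constant $C_p(\a,\b)$, together with its dichotomy $1<p<2$ versus $p\ge 2$, should fall out of an endpoint computation. First I would discard the trivial indices: if $z_k=0$ then $0\le w_k^p\le 0$, so $w_k=0$ and the $k$-th term contributes nothing to any of the three sums. Thus we may assume $z_k>0$, hence also $w_k^p\ge\a z_k^q>0$, for every $k$. Setting
$$
t_k:=\frac{w_k^p}{z_k^q}\in[\a,\b],\qquad \mu_k:=\frac{z_k^q}{\sum_j z_j^q},
$$
one gets a probability vector $(\mu_k)$, and using $w_k^p=t_k z_k^q$ a direct computation gives $\sum_k w_k^p=\big(\sum_j z_j^q\big)\sum_k t_k\mu_k$ and $\sum_k w_kz_k=\sum_k t_k^{1/p}z_k^q=\big(\sum_j z_j^q\big)\sum_k t_k^{1/p}\mu_k$. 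Writing $m:=\sum_k t_k\mu_k\in[\a,\b]$, the claimed bound is therefore equivalent to
$$
m^{1/p}\le C_p(\a,\b)\sum_k t_k^{1/p}\mu_k .
$$

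The heart of the argument is the pointwise estimate
$$
\xi^{1/p}\ \ge\ \frac{1}{C_p(\a,\b)}\,\ell(\xi)\qquad\text{for all }\xi\in[\a,\b],
$$
where $\ell(\xi):=\g^{1/p}+\tfrac1p\,\g^{1/p-1}(\xi-\g)$ is the tangent to $\xi\mapsto\xi^{1/p}$ at $\g=\sqrt{\a\b}$. To get it I would study $h(\xi):=\ell(\xi)\,\xi^{-1/p}$: a short computation yields $h'(\xi)=\tfrac{1}{pq}\,\g^{1/p-1}\xi^{-1/p-1}(\xi-\g)$, so $h$ decreases on $[\a,\g]$, increases on $[\g,\b]$, equals $1$ at $\xi=\g$, and attains its maximum over $[\a,\b]$ at an endpoint. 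Evaluating and simplifying one finds
$$
h(\b)=\tfrac1p\Big(\tfrac\b\a\Big)^{1/(2q)}+\tfrac1q\Big(\tfrac\a\b\Big)^{1/(2p)},\qquad
h(\a)=\tfrac1q\Big(\tfrac\b\a\Big)^{1/(2p)}+\tfrac1p\Big(\tfrac\a\b\Big)^{1/(2q)},
$$
and a routine comparison (monotone in $p$, with equality at $p=2$) shows $h(\a)\ge h(\b)$ when $1<p<2$ and $h(\b)\ge h(\a)$ when $p\ge2$. Hence $\max_{[\a,\b]}h=C_p(\a,\b)$ exactly as stated, so $\xi^{1/p}/\ell(\xi)=1/h(\xi)\ge 1/C_p(\a,\b)$, which is the pointwise estimate.

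Finally I would sum the pointwise estimate against the weights $\mu_k$: since $\ell$ is affine and $\sum_k\mu_k=1$,
$$
\sum_k t_k^{1/p}\mu_k\ \ge\ \frac{1}{C_p(\a,\b)}\sum_k\ell(t_k)\mu_k\ =\ \frac{1}{C_p(\a,\b)}\,\ell(m),
$$
and because $\ell$ is a tangent line of the concave function $\xi\mapsto\xi^{1/p}$ one has $\ell(m)\ge m^{1/p}$; chaining the last two displays gives the reduced inequality, hence the Proposition. For the equality statement I would trace tightness back through the chain: equality in $\ell(m)\ge m^{1/p}$ forces $m=\g$, while equality in the pointwise estimate at every index with $\mu_k>0$ forces each such $t_k$ to be the endpoint realizing $\max_{[\a,\b]}h$, unless $\a=\b$ (in which case $C_p(\a,\b)=1$ and every step is an identity); together these force $\a=\b$ and $w_k^p=\a z_k^q$ for all $k$.

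The step I expect to be the main obstacle is precisely the pointwise estimate: one must realize that the right auxiliary line is the tangent at the geometric mean rather than the more obvious secant through $(\a,\a^{1/p})$ and $(\b,\b^{1/p})$ (the secant leads to a sharper but much less clean constant), and then carry out the endpoint evaluation and the $1<p<2$ versus $p\ge2$ comparison that reproduce $C_p(\a,\b)$. Everything else — the bookkeeping in the reduction step and the two applications of concavity — is routine.
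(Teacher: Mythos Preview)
The paper does not actually prove this proposition; it simply quotes it from \cite[Corollary~2]{BMRS}. So there is no proof in the paper to compare yours against, and I will just assess your argument on its own.

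Your proof of the \emph{inequality} is correct and clean. The reduction via $t_k=w_k^p/z_k^q$ and the probability weights $\mu_k$ is carried out without error (the identity $q/p+1=q$ is exactly what makes $w_kz_k=t_k^{1/p}z_k^q$ work), and the tangent-line trick at $\gamma=\sqrt{\alpha\beta}$ really does produce the stated constant: your computation of $h'(\xi)$, of $h(\alpha)$, $h(\beta)$, and of which endpoint dominates for $p\lessgtr 2$, all check out. The two concavity steps $\ell(m)\ge m^{1/p}$ and $\sum_k t_k^{1/p}\mu_k\ge C_p^{-1}\ell(m)$ then finish the job.

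There is, however, a genuine gap in your treatment of the \emph{equality case}, and it is not one you can repair: the proposition's ``only if'' clause is false at $p=2$. For $p\neq 2$ your argument is fine: the maximum of $h$ on $[\alpha,\beta]$ is attained at a single endpoint, so equality forces every $t_k$ to sit there, while simultaneously $m=\gamma$, and these two facts together force $\alpha=\beta$. But at $p=2$ one has $h(\alpha)=h(\beta)=C_2(\alpha,\beta)$, so the pointwise equality only pins each $t_k$ to $\{\alpha,\beta\}$, and one can then choose the weights so that $m=\sqrt{\alpha\beta}$. Concretely, with $p=q=2$, $\alpha=1$, $\beta=4$, $n=3$, $z=(1,1,1)$, $w=(1,1,2)$ one gets
\[
\Big(\sum w_k^2\Big)^{1/2}\Big(\sum z_k^2\Big)^{1/2}=\sqrt{6}\,\sqrt{3}=3\sqrt{2},
\qquad
C_2(1,4)\sum w_kz_k=\frac{3}{2\sqrt{2}}\cdot 4=3\sqrt{2},
\]
so equality holds although $\alpha\neq\beta$ and $w_3^2\neq \alpha z_3^2$. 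Your sentence ``together these force $\alpha=\beta$'' therefore overreaches precisely at $p=2$; the correct conclusion of your own analysis is that equality holds iff either $\alpha=\beta$ (and then automatically $w_k^p=\alpha z_k^q$), or $p=2$ and each $t_k\in\{\alpha,\beta\}$ with the weights arranged so that $\sum_k t_k\mu_k=\sqrt{\alpha\beta}$. This is a defect in the proposition as stated (and hence in the source being quoted), not a defect in your method.
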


The next result relates several $KA$ indices.

\begin{theorem} \label{t:holder}
Let $G$ be any graph, $\a,\b,\mu \in \RR$ and $p>1$.
Then
$$
\begin{aligned}
D_p^p \,KA_{\a,p(\b-\mu)}(G) & \, KA_{\a,p \mu /(p-1)}(G)^{p-1}
\\
& \le KA_{\a,\b}(G)^p
\le KA_{\a,p(\b-\mu)}(G) \, KA_{\a,p \mu /(p-1)}(G)^{p-1} ,
\end{aligned}
$$
where
$$
	D_p
	= \begin{cases}
		\,C_p\big( ( 2\d^{\a} )^{p(\b-\mu \frac{p }{p-1})}, ( 2\D^{\a} )^{p(\b-\mu \frac{p }{p-1})} \big)^{-1},
& \;\;\; \text{if } \a (\b-\mu \frac{p }{p-1}\,) \ge 0 ,
		\\
		\, & \,
		\\
		\,C_p\big( ( 2\D^{\a} )^{p(\b-\mu \frac{p }{p-1})}, ( 2\d^{\a} )^{p(\b-\mu \frac{p }{p-1})} \big)^{-1},
& \;\;\; \text{if } \a (\b-\mu \frac{p }{p-1}\,) < 0 ,
	\end{cases}
	$$
and $C_p$ is the constant in Proposition \ref{c:holder}.

The equality in the upper bound is tight for each $\a,\b,\mu,p$
if $G$ is a biregular graph.

The equality in the lower bound is tight for each $\a,\b,\mu,p$ with $\a (\b-\mu \frac{p }{p-1}\,) \neq 0$
if and only if $G$ is a regular graph.
\end{theorem}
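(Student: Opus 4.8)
The plan is to factor each summand of $KA_{\a,\b}(G)$ as $(d_u^\a+d_v^\a)^\b=(d_u^\a+d_v^\a)^{\b-\mu}(d_u^\a+d_v^\a)^{\mu}$, sum over $uv\in E(G)$, and then apply H\"older's inequality (for the upper bound) and its converse, Proposition \ref{c:holder} (for the lower bound), with the conjugate exponents $p$ and $q=p/(p-1)$. Concretely, set $w_{uv}=(d_u^\a+d_v^\a)^{\b-\mu}$ and $z_{uv}=(d_u^\a+d_v^\a)^{\mu}$ for $uv\in E(G)$; these are positive and $E(G)\neq\emptyset$, so every $KA$ index below is a finite positive number. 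Since $\mu q=p\mu/(p-1)$ we have
$$
\sum_{uv\in E(G)}w_{uv}z_{uv}=KA_{\a,\b}(G),\qquad \sum_{uv\in E(G)}w_{uv}^{p}=KA_{\a,p(\b-\mu)}(G),\qquad \sum_{uv\in E(G)}z_{uv}^{q}=KA_{\a,p\mu/(p-1)}(G).
$$
H\"older's inequality then gives $KA_{\a,\b}(G)\le KA_{\a,p(\b-\mu)}(G)^{1/p}KA_{\a,p\mu/(p-1)}(G)^{1/q}$, and raising this to the power $p$ (using $p/q=p-1$) yields the claimed upper bound.

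For the lower bound I would check that $(w_{uv})$ and $(z_{uv})$ satisfy the hypothesis of Proposition \ref{c:holder}. Put $\g=p\big(\b-\mu\frac{p}{p-1}\big)$; a direct computation gives $w_{uv}^{p}/z_{uv}^{q}=(d_u^\a+d_v^\a)^{\g}$, so the required two-sided bound $A\,z_{uv}^{q}\le w_{uv}^{p}\le B\,z_{uv}^{q}$ reduces to bounding $(d_u^\a+d_v^\a)^{\g}$ over the edges. From $\d\le d_u,d_v\le\D$ one gets $2\d^\a\le d_u^\a+d_v^\a\le 2\D^\a$ when $\a>0$ and the reversed chain when $\a<0$; raising to the power $\g$ and tracking the sign of $\g$ (equivalently, since $p>0$, of $\b-\mu\frac{p}{p-1}$) shows that in every case the extreme values of $(d_u^\a+d_v^\a)^{\g}$ are $(2\d^\a)^{\g}$ and $(2\D^\a)^{\g}$, with $(2\d^\a)^{\g}$ being the smaller one exactly when $\a\big(\b-\mu\frac{p}{p-1}\big)\ge 0$. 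These are precisely the two branches in the definition of $D_p$, so Proposition \ref{c:holder} gives $KA_{\a,p(\b-\mu)}(G)^{1/p}KA_{\a,p\mu/(p-1)}(G)^{1/q}\le D_p^{-1}KA_{\a,\b}(G)$; rearranging and raising to the power $p$ gives the lower bound.

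For the equality statements: in the upper bound, equality in H\"older's inequality for positive sequences holds iff the ratio $w_{uv}^{p}/z_{uv}^{q}=(d_u^\a+d_v^\a)^{\g}$ is constant along $E(G)$; if $G$ is biregular, say with vertex degrees $r$ on one side and $s$ on the other, then $d_u^\a+d_v^\a=r^\a+s^\a$ for every edge, so this holds and the upper bound is tight. In the lower bound, assume $\a\big(\b-\mu\frac{p}{p-1}\big)\neq 0$, so that $\g\neq 0$ and $\a\neq 0$. The equality clause of Proposition \ref{c:holder} forces the two constants to coincide, i.e. $(2\d^\a)^{\g}=(2\D^\a)^{\g}$; since $\g\neq 0$ this gives $\d^\a=\D^\a$, hence $\d=\D$, i.e. $G$ is regular. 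Conversely, if $G$ is regular then both constants equal $(2\d^\a)^{\g}$ and $w_{uv}^{p}=(2\d^\a)^{\g}z_{uv}^{q}$ for every edge while $(w_{uv})\neq 0$, so Proposition \ref{c:holder}, and hence the lower bound, is tight.

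The only mildly delicate part is the sign bookkeeping in the second paragraph: correctly identifying which of $(2\d^\a)^{\g}$ and $(2\D^\a)^{\g}$ plays the role of $\a$ and which of $\b$ in Proposition \ref{c:holder}, and confirming that this matches the case split in the definition of $D_p$. This is a routine four-way case analysis on the sign of $\a$ and the sign of $\b-\mu\frac{p}{p-1}$, and I do not expect a genuine obstacle.
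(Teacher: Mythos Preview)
Your proposal is correct and follows essentially the same route as the paper: factor $(d_u^\a+d_v^\a)^\b$ as $(d_u^\a+d_v^\a)^{\b-\mu}(d_u^\a+d_v^\a)^{\mu}$, apply H\"older's inequality with exponents $p$ and $q=p/(p-1)$ for the upper bound, and apply Proposition~\ref{c:holder} for the lower bound after bounding the ratio $(d_u^\a+d_v^\a)^{p(\b-\mu\frac{p}{p-1})}$ via $\d$ and $\D$. The only cosmetic difference is that, for the tightness of the upper bound in the biregular case, the paper verifies the equality by direct computation of both sides, whereas you invoke the equality condition of H\"older; both are equally valid.
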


\begin{proof}
H\"older inequality gives
$$
\begin{aligned}
KA_{\a,\b}(G)
& = \sum_{uv \in E(G)} \big( d_u^{\a} + d_v^{\a} \big)^{\b-\mu} \big( d_u^{\a} + d_v^{\a} \big)^{\mu}
\\
& \le \Big( \sum_{uv \in E(G)} \big( d_u^{\a} + d_v^{\a} \big)^{p(\b-\mu)} \Big)^{1/p}
\Big( \sum_{uv \in E(G)} \big( d_u^{\a} + d_v^{\a} \big)^{p \mu /(p-1)} \Big)^{(p-1)/p} ,
\\
KA_{\a,\b}(G)^p
& \le KA_{\a,p(\b-\mu)}(G) \, KA_{\a,p \mu /(p-1)}(G)^{p-1} .
\end{aligned}
$$
If $G$ is biregular, we obtain
$$
\begin{aligned}
& KA_{\a,p(\b-\mu)}(G) \, KA_{\a,p \mu /(p-1)}(G)^{p-1}
= (\D^\a + \d^\a )^{p(\b-\mu)} m
\big( (\D^\a + \d^\a )^{p \mu /(p-1)} m \big)^{p-1}
\\
& = (\D^\a + \d^\a )^{p(\b-\mu)} (\D^\a + \d^\a )^{p \mu} m^p
= \big( (\D^\a + \d^\a )^{\b} m \big)^{p}
= KA_{\a,\b}(G)^p.
\end{aligned}
$$
Since
$$
\frac{\big( d_u^{\a} + d_v^{\a} \big)^{p(\b-\mu)}}{\big( d_u^{\a} + d_v^{\a} \big)^{p \mu /(p-1)}}
= \big( d_u^{\a} + d_v^{\a} \big)^{p(\b-\mu \frac{p }{p-1})} ,
$$
if $\a p(\b-\mu \frac{p }{p-1}\,) \ge 0$, then
$$
\big( 2\d^{\a} \big)^{p(\b-\mu \frac{p }{p-1})}
\le \frac{\big( d_u^{\a} + d_v^{\a} \big)^{p(\b-\mu)}}{\big( d_u^{\a} + d_v^{\a} \big)^{p \mu /(p-1)}}
\le \big( 2\D^{\a} \big)^{p(\b-\mu \frac{p }{p-1})} ,
$$
and if $\a p(\b-\mu \frac{p }{p-1}\,) < 0$, then
$$
\big( 2\D^{\a} \big)^{p(\b-\mu \frac{p }{p-1})}
\le \frac{\big( d_u^{\a} + d_v^{\a} \big)^{p(\b-\mu)}}{\big( d_u^{\a} + d_v^{\a} \big)^{p \mu /(p-1)}}
\le \big( 2\d^{\a} \big)^{p(\b-\mu \frac{p }{p-1})} .
$$
Proposition \ref{c:holder} gives
$$
\begin{aligned}
KA_{\a,\b}(G)
& = \sum_{uv \in E(G)} \big( d_u^{\a} + d_v^{\a} \big)^{\b-\mu} \big( d_u^{\a} + d_v^{\a} \big)^{\mu}
\\
& \ge D_p \Big( \sum_{uv \in E(G)} \big( d_u^{\a} + d_v^{\a} \big)^{p(\b-\mu)} \Big)^{1/p}
\Big( \sum_{uv \in E(G)} \big( d_u^{\a} + d_v^{\a} \big)^{p \mu /(p-1)} \Big)^{(p-1)/p} ,
\\
KA_{\a,\b}(G)^p
& \ge D_p^p \, KA_{\a,p(\b-\mu)}(G) \, KA_{\a,p \mu /(p-1)}(G)^{p-1} .
\end{aligned}
$$

Proposition \ref{c:holder} gives that the equality is tight in this last bound for some $\a,\b,\mu,p$ with $\a (\b-\mu \frac{p }{p-1}\,) \neq 0$ if and only if
$$
( 2\d^{\a} )^{p(\b-\mu \frac{p }{p-1})}
= ( 2\D^{\a} )^{p(\b-\mu \frac{p }{p-1})}
\quad
\Leftrightarrow
\quad
\d=\D,
$$
i.e., $G$ is regular.
\end{proof}

If we take $\b=0$ in Theorem \ref{t:holder}
we obtain the following result.

\begin{corollary} \label{c:holder2}
Let $G$ be any graph, $\a,\mu \in \RR$ and $p>1$.
Then
$$
\begin{aligned}
KA_{\a,-p\mu}(G) \, KA_{\a,p \mu /(p-1)}(G)^{p-1}
\ge m^p.
\end{aligned}
$$
The equality in the bound is tight for each $\a,\mu,p$
if $G$ is a biregular graph.
\end{corollary}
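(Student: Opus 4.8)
The plan is to specialize Theorem \ref{t:holder} to the case $\b=0$; there is nothing else to do. The one computation worth isolating is the identification of $KA_{\a,0}(G)$: since every vertex has a neighbour, $d_u,d_v>0$ for each edge $uv$, so $(d_u^{\a}+d_v^{\a})^0=1$ and hence
$$
KA_{\a,0}(G)=\sum_{uv\in E(G)} 1 = m .
$$

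First I would substitute $\b=0$ into the upper bound of Theorem \ref{t:holder}, namely $KA_{\a,\b}(G)^p\le KA_{\a,p(\b-\mu)}(G)\,KA_{\a,p\mu/(p-1)}(G)^{p-1}$. The left side becomes $KA_{\a,0}(G)^p=m^p$, the first exponent $p(\b-\mu)$ becomes $-p\mu$, and the last factor $KA_{\a,p\mu/(p-1)}(G)^{p-1}$ is unchanged. This yields exactly
$$
m^p \le KA_{\a,-p\mu}(G)\,KA_{\a,p\mu/(p-1)}(G)^{p-1},
$$
which is the asserted inequality.

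For the equality assertion I would invoke the sufficiency statement in Theorem \ref{t:holder}: the upper bound there is tight for every choice of the parameters whenever $G$ is biregular, and taking $\b=0$ gives the claim. Alternatively one can verify it directly: if $G$ is biregular, then $KA_{\a,-p\mu}(G)=(\D^{\a}+\d^{\a})^{-p\mu}m$ and $KA_{\a,p\mu/(p-1)}(G)=(\D^{\a}+\d^{\a})^{p\mu/(p-1)}m$, so the product of the two factors is $(\D^{\a}+\d^{\a})^{-p\mu}(\D^{\a}+\d^{\a})^{p\mu}m^p=m^p$.

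I do not anticipate any real obstacle here: the result is a pure corollary. The only point needing a moment's attention is that the $\b=0$ substitution stays inside the scope of Theorem \ref{t:holder} — its upper bound holds for all real $\a,\b,\mu$ and all $p>1$ with no regularity hypotheses, so no extra assumptions creep in — and, correspondingly, that the corollary asserts only the ``if biregular'' direction for equality, matching the sufficient (not necessary) condition available for the upper bound in Theorem \ref{t:holder}.
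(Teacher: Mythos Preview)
Your proposal is correct and matches the paper's own approach exactly: the paper simply states that the corollary follows by taking $\beta=0$ in Theorem~\ref{t:holder}, which is precisely what you do. Your explicit identification $KA_{\alpha,0}(G)=m$ and the direct biregular verification are helpful elaborations but add nothing beyond the paper's one-line derivation.
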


If we take $\a=2$, $\b=0$, $p=2$ and $\mu=1/4$ in Theorem \ref{t:holder}
we obtain the following result.

\begin{corollary} \label{c:holder3}
If $G$ is any graph, then
$$
\begin{aligned}
m^2
\le {}^m SO(G) \, SO(G)
\le \frac{(\D+\d)^2}{4\D\d} \, m^2.
\end{aligned}
$$
The equality in the upper bound is tight if and only if $G$ is regular.
The equality in the lower bound is tight if $G$ is a biregular graph.
\end{corollary}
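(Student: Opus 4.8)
The plan is to specialize Theorem \ref{t:holder} to $\a=2$, $\b=0$, $p=2$, $\mu=1/4$ and then identify each index that appears. First I would evaluate the middle quantity: $KA_{\a,\b}(G)=KA_{2,0}(G)=\sum_{uv\in E(G)}(d_u^2+d_v^2)^0=m$, so $KA_{\a,\b}(G)^p=m^2$. Next, $p(\b-\mu)=2\big(0-\tfrac14\big)=-\tfrac12$, hence $KA_{\a,p(\b-\mu)}(G)=KA_{2,-1/2}(G)={}^mSO(G)$; and $p\mu/(p-1)=2\cdot\tfrac14=\tfrac12$, hence $KA_{\a,p\mu/(p-1)}(G)=KA_{2,1/2}(G)=SO(G)$, so that $KA_{\a,p\mu/(p-1)}(G)^{p-1}=SO(G)$. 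With these substitutions the two-sided inequality of Theorem \ref{t:holder} becomes $D_2^2\,{}^mSO(G)\,SO(G)\le m^2\le {}^mSO(G)\,SO(G)$, and the right-hand inequality is already the asserted lower bound $m^2\le {}^mSO(G)\,SO(G)$.

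The remaining work is to compute $D_2$ explicitly. Here $\a\big(\b-\mu\tfrac{p}{p-1}\big)=2\big(0-\tfrac14\cdot2\big)=-1<0$, so the second branch of the definition of $D_p$ applies, with exponent $p\big(\b-\mu\tfrac{p}{p-1}\big)=-1$, giving $D_2=C_2\big((2\D^2)^{-1},(2\d^2)^{-1}\big)^{-1}$. Since $\d\le\D$ we have $\tfrac{1}{2\D^2}\le\tfrac{1}{2\d^2}$, so Proposition \ref{c:holder} may be applied with the ordering $0<\tfrac{1}{2\D^2}\le\tfrac{1}{2\d^2}$; with $p=q=2$ (the case $p\ge2$) this yields $C_2\big(\tfrac{1}{2\D^2},\tfrac{1}{2\d^2}\big)=\tfrac12\big(\tfrac{\D^2}{\d^2}\big)^{1/4}+\tfrac12\big(\tfrac{\d^2}{\D^2}\big)^{1/4}=\tfrac12\sqrt{\D/\d}+\tfrac12\sqrt{\d/\D}=\frac{\D+\d}{2\sqrt{\D\d}}$. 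Therefore $D_2=\frac{2\sqrt{\D\d}}{\D+\d}$ and $1/D_2^2=\frac{(\D+\d)^2}{4\D\d}$, so the left-hand inequality $D_2^2\,{}^mSO(G)\,SO(G)\le m^2$ rearranges to the asserted upper bound ${}^mSO(G)\,SO(G)\le\frac{(\D+\d)^2}{4\D\d}\,m^2$.

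For the equality statements I would simply invoke Theorem \ref{t:holder}. Since $\a\big(\b-\mu\tfrac{p}{p-1}\big)=-1\neq0$, the theorem's characterization of equality in its lower bound applies and shows that equality in the corollary's upper bound holds if and only if $G$ is regular; and the theorem's sufficient condition for equality in its upper bound shows that equality in the corollary's lower bound holds when $G$ is biregular. I do not expect any real obstacle here: the argument is pure bookkeeping. The two points requiring care are (i) that the lower bound of Theorem \ref{t:holder} becomes the upper bound of the corollary and conversely, so the two equality characterizations must be matched accordingly, and (ii) that the hypothesis $0<\a\le\b$ of Proposition \ref{c:holder} must be checked, which amounts to the elementary observation $\tfrac{1}{2\D^2}\le\tfrac{1}{2\d^2}$ (and is precisely why, the exponent $p(\b-\mu\tfrac{p}{p-1})$ being negative, the arguments of $C_2$ are written with $2\D^2$ first and $2\d^2$ second).
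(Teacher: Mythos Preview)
Your proposal is correct and follows exactly the approach indicated in the paper, which simply specializes Theorem \ref{t:holder} to $\alpha=2$, $\beta=0$, $p=2$, $\mu=1/4$. Your explicit computation of $D_2$ and the care you take in matching the equality cases (noting that the lower/upper bounds swap when passing from the theorem to the corollary) fill in precisely the details the paper leaves implicit.
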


Note that the following result improves the upper bound in Corollary \ref{c:ineq1cuatris} when $\a>1$.

\begin{theorem} \label{t:alb}
Let $G$ be any graph with maximum degree $\D$ and minimum degree $\d$, and $\a \ge 1$.
Then
$$
\begin{aligned}
2^{1/\a-1} M_1(G)
\le SO_\a(G)
& \le M_1(G) - ( 2 - 2^{1/\a} ) \d ,
\end{aligned}
$$
and the equality holds for some $\a>1$ in each bound if and only if $G$ is regular.
\end{theorem}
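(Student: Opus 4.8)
The plan is to prove each of the two inequalities by an edgewise estimate and then sum over $E(G)$, using the identity $\sum_{uv\in E(G)}(d_u+d_v)=\sum_{u\in V(G)}d_u^{2}=M_1(G)$.

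\emph{The lower bound} requires nothing new: it is the $\a>1$ instance of the lower bound in Corollary~\ref{c:ineq1cuatris}. (Equivalently, with $a=1/\a\in(0,1]$, $x=d_u^{\a}$ and $y=d_v^{\a}$, the inequalities recalled at the start of Section~2 give $2^{1/\a-1}(d_u+d_v)\le(d_u^{\a}+d_v^{\a})^{1/\a}$ on each edge; summing yields the claim, and for $\a>1$ equality on an edge forces $d_u=d_v$, so equality overall forces every connected component of $G$ to be regular.)

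\emph{For the upper bound} the idea is to use the coarse estimate $(d_u^{\a}+d_v^{\a})^{1/\a}\le d_u+d_v$ (the $a=1/\a\le1$ case of the same inequalities) on all but one edge, and to save the term $(2-2^{1/\a})\d$ on a single edge meeting a vertex of minimum degree. The technical heart is the one-variable inequality that I would establish first: for $\a\ge1$ and $t\ge\d$,
\[
(\d^{\a}+t^{\a})^{1/\a}\le t+(2^{1/\a}-1)\,\d ,
\]
with equality, when $\a>1$, only at $t=\d$. Indeed, setting $g(t)=(\d^{\a}+t^{\a})^{1/\a}-t$ one has $g(\d)=(2^{1/\a}-1)\,\d$ and
\[
g'(t)=t^{\a-1}(\d^{\a}+t^{\a})^{(1-\a)/\a}-1=\Big(\frac{t^{\a}}{\d^{\a}+t^{\a}}\Big)^{(\a-1)/\a}-1\le 0 ,
\]
since $(\a-1)/\a\ge0$ and $0<t^{\a}/(\d^{\a}+t^{\a})<1$; for $\a>1$ the derivative is strictly negative, so $g$ is strictly decreasing. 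Choosing an edge $e_0=u_0v_0$ with $d_{u_0}=\d$ (a vertex of degree $\d$ has a neighbour), the lemma gives on $e_0$
\[
(d_{u_0}^{\a}+d_{v_0}^{\a})^{1/\a}\le d_{v_0}+(2^{1/\a}-1)\d=(d_{u_0}+d_{v_0})-(2-2^{1/\a})\d ,
\]
and adding the coarse bound over the remaining edges gives $SO_\a(G)\le M_1(G)-(2-2^{1/\a})\d$.

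\emph{Equality}, for $\a>1$, forces equality in the lemma on $e_0$, hence $d_{v_0}=\d$, and equality in $(d_u^{\a}+d_v^{\a})^{1/\a}=d_u+d_v$ on every other edge; the latter is strict whenever $d_u,d_v>0$, so $G$ can have no further edges, i.e.\ $G$ is a single edge; in particular $G$ is regular, and conversely such a $G$ attains the bound. I expect the only real obstacle to be the sharp one-variable inequality above and the careful tracking of when it — and each auxiliary estimate — is tight; everything else is summation and results already proved in the paper. (Written homogeneously as $(d_u^{\a}+d_v^{\a})^{1/\a}\le(d_u+d_v)-(2-2^{1/\a})\min(d_u,d_v)$ and summed over all edges, the same estimate also yields the stronger $SO_\a(G)\le M_1(G)-(2-2^{1/\a})\d\,m$, whose equality case is precisely the regular graphs; the stated bound follows since $m\ge1$.)
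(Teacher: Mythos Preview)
Your argument is correct and in places cleaner than the paper's. For the lower bound you do exactly what the paper does (invoke Corollary~\ref{c:ineq1cuatris}). For the upper bound the paper also reduces to the pointwise inequality $(x^{\a}+y^{\a})^{1/\a}\le x+(2^{1/\a}-1)y$ for $x\ge y\ge0$, but proves it by a Lagrange-multiplier computation of the minimum of $x+(2^{1/\a}-1)y$ on the arc $\{x^{\a}+y^{\a}=1,\ x\ge y\ge0\}$; your monotonicity argument via $g'(t)\le0$ is shorter and more transparent, and it pins down the equality case $t=\d$ immediately.

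There is one packaging difference worth noting. The paper applies the pointwise estimate on \emph{every} edge and sums; read literally this yields the stronger bound $SO_{\a}(G)\le M_1(G)-(2-2^{1/\a})\d\,m$ that you record in your parenthetical, and it is \emph{that} bound whose equality case is precisely ``$G$ regular''. Your ``one distinguished edge plus coarse bound on the rest'' route proves the displayed inequality with a bare $\d$ directly, and your equality analysis for it is in fact sharper: equality in the \emph{stated} bound forces $G$ to have a single edge, not merely to be regular (for a $k$-regular graph with $m>1$ edges one checks $2^{1/\a}km<2km-(2-2^{1/\a})k$). So the ``if and only if $G$ is regular'' clause matches the $\d\,m$ version; you have correctly identified this and supplied both arguments.
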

\begin{proof}
The lower bound follows from Corollary \ref{c:ineq1cuatris}.
Let us prove the upper bound.

First of all, we are going to prove that
\begin{equation} \label{eq:alb}
( x^\a + y^\a )^{1/\a} \le x + ( 2^{1/\a} - 1 )y
\end{equation}
for every $\a \ge 1$ and $x \ge y \ge 0$.
Since \eqref{eq:alb} is direct for $\a = 1$, it suffices to consider the case $\a > 1$.

We want to compute the minimum value of the function
$$
f(x,y) = x + ( 2^{1/\a} - 1 )y
$$
with the restrictions
$g(x,y)= x^{\a} +y^{\a}=1$, $x \ge y \ge 0$.
If $(x,y)$ is a critical point, then there exists $\l \in \RR$ such that
$$
\begin{aligned}
1
& = \l\, \a\, x^{\a-1},
\\
2^{1/\a} - 1
& = \l\, \a\, y^{\a-1},
\end{aligned}
$$
and so, $(y/x)^{\a-1}=2^{1/\a} - 1$ and $y=(2^{1/\a} - 1)^{1/(\a-1)}x$;
this fact and the equality $x^{\a} +y^{\a}=1$ imply
$$
\begin{aligned}
& \big( 1 + (2^{1/\a} - 1)^{\a/(\a-1)} \big) \, x^\a=1 ,
\\
& x = \big( 1 + (2^{1/\a} - 1)^{\a/(\a-1)} \big)^{-1/\a} ,
\\
& y = (2^{1/\a} - 1)^{1/(\a-1)} \big( 1 + (2^{1/\a} - 1)^{\a/(\a-1)} \big)^{-1/\a} ,
\\
f(x,y)
& = \big( 1 + (2^{1/\a} - 1)^{\a/(\a-1)} \big)^{-1/\a}
\\
& \quad + (2^{1/\a} - 1)(2^{1/\a} - 1)^{1/(\a-1)}\big( 1 + (2^{1/\a} - 1)^{\a/(\a-1)} \big)^{-1/\a}
\\
& = \big( 1 + (2^{1/\a} - 1)^{\a/(\a-1)} \big)^{-1/\a}
\\
& \quad + (2^{1/\a} - 1)^{\a/(\a-1)}\big( 1 + (2^{1/\a} - 1)^{\a/(\a-1)} \big)^{-1/\a}
\\
& = \big( 1 + (2^{1/\a} - 1)^{\a/(\a-1)} \big)^{(\a-1)/\a}
> 1.
\end{aligned}
$$

If $y = 0$, then $x = 1$ and $f(x,y) = 1$.

If $y = x$, then $x = 2^{-1/\a} = y$ and
$$
f(x,y)
= 2^{-1/\a} + (2^{1/\a} - 1) 2^{-1/\a}
= 1.
$$

Hence, $f(x,y) \ge 1$ and the bound is tight if and only if $y=0$ or $y=x$.
By homogeneity, we have $f(x,y) \ge 1$ for every $x \ge y \ge 0$ and the bound is tight if and only if $y=0$ or $y=x$.
This finishes the proof of \eqref{eq:alb}.

\smallskip

Consequently,
$$
( d_u^\a + d_v^\a )^{1/\a}
\le d_u + ( 2^{1/\a} - 1 )d_v
= d_u+d_v - ( 2 - 2^{1/\a} ) d_v
$$
for each $\a \ge 1$ and $d_u \ge d_v$.
Thus,
$$
( d_u^\a + d_v^\a )^{1/\a} \le d_u+d_v - ( 2 - 2^{1/\a} ) \d
$$
for each $\a \ge 1$ and $uv \in E(G)$,
and the equality holds for some $\a>1$ if and only if $d_u=d_v= \d$.
Therefore,
$$
SO_\a(G) \le M_1(G) - ( 2 - 2^{1/\a} ) \d ,
$$
and the equality holds for some $\a>1$ if and only if $d_u=d_v= \d$ for every $uv \in E(G)$, i.e., $G$ is regular.
\end{proof}

\begin{corollary} \label{c:alb}
Let $G$ be any graph with maximum degree $\D$ and minimum degree $\d$.
Then
$$
\begin{aligned}
2^{-1/2} M_1(G)
\le SO(G)
& \le M_1(G) - \big( 2 - \sqrt{2} \, \big) \d ,
\end{aligned}
$$
and the equality holds in each bound if and only if $G$ is regular.
\end{corollary}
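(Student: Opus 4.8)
The plan is to derive Corollary~\ref{c:alb} directly as the particular case $\a=2$ of Theorem~\ref{t:alb}; no independent argument is required. First I would observe that, by the definition \eqref{pSO} of the $\a$-Sombor index,
$$
SO_2(G) = \sum_{uv \in E(G)} (d_u^2 + d_v^2)^{1/2} = SO(G),
$$
so that the quantity $SO_\a(G)$ appearing in Theorem~\ref{t:alb} coincides with the Sombor index $SO(G)$ precisely when $\a=2$.

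Next I would substitute $\a=2$ into the two bounds furnished by Theorem~\ref{t:alb}; this is permissible because $2 \ge 1$. The exponent $1/\a-1$ becomes $-1/2$, and the constant $2-2^{1/\a}$ becomes $2-\sqrt{2}$, so that the chain
$$
2^{1/\a-1} M_1(G) \le SO_\a(G) \le M_1(G) - (2-2^{1/\a})\,\d
$$
turns into the asserted inequality
$$
2^{-1/2} M_1(G) \le SO(G) \le M_1(G) - (2-\sqrt{2})\,\d .
$$

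Finally, for the equality discussion I note that $\a=2$ satisfies $\a>1$, so the characterization in Theorem~\ref{t:alb} — that equality holds for some $\a>1$ in either of its bounds if and only if $G$ is regular — applies at the value $\a=2$ and yields that equality holds in each of the two bounds above exactly when $G$ is regular. I do not anticipate any genuine obstacle here: the only point to verify is that the hypotheses of Theorem~\ref{t:alb} ($\a\ge1$ for the inequalities and $\a>1$ for the equality statement) are met by $\a=2$, which is immediate.
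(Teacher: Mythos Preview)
Your proposal is correct and matches the paper's approach exactly: the paper states Corollary~\ref{c:alb} immediately after Theorem~\ref{t:alb} without any proof, making clear it is just the specialization $\a=2$. Your verification that $SO_2=SO$, that $2^{1/\a-1}=2^{-1/2}$ and $2-2^{1/\a}=2-\sqrt{2}$ at $\a=2$, and that $\a=2>1$ so the equality characterization applies, is precisely what is needed.
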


The upper bound in Corollary \ref{c:alb} appears in \cite[Theorem 2.7]{Das-21}.
Hence, Theorem \ref{t:alb} generalizes \cite[Theorem 2.7]{Das-21}.

\medskip

A family of topological indices, named \emph{Adriatic indices}, was put
forward in \cite{V1,V2}. Twenty of them were selected as significant predictors in Mathematical Chemistry.
One of them, the \emph{inverse sum indeg} index, $ISI$, was singled out
in \cite{V2} as a significant predictor of total surface area of octane isomers.
This index is defined as
$$
ISI(G) = \sum_{uv\in E(G)} \frac{d_u\,d_v}{d_u + d_v}
= \sum_{uv\in E(G)} \frac{1}{\frac{1}{d_u} + \frac{1}{d_v}}\,.
$$
In the last years there is an increasing interest in the mathematical
properties of this index.
We finish this section with two inequalities relating the Sombor, the first Zagreb and the inverse sum indeg indices.

\begin{theorem} \label{t:isi}
Let $G$ be any graph with maximum degree $\D$ and minimum degree $\d$, and $\a \ge 1$.
Then
$$
\sqrt{2}\, \big( M_1(G) - 2 ISI(G) \big)
\ge SO(G)
> M_1(G) - 2 ISI(G)
$$
and the upper bound is tight if and only if $G$ has regular connected components.
\end{theorem}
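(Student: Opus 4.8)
The plan is to reduce the whole statement to a single edge-wise inequality. First I would record the two standard identities $M_1(G) = \sum_{uv\in E(G)}(d_u+d_v)$ (each vertex $v$ lies on exactly $d_v$ edges) and $ISI(G) = \sum_{uv\in E(G)} \frac{d_u d_v}{d_u+d_v}$, from which
$$
M_1(G) - 2\, ISI(G) = \sum_{uv \in E(G)} \Big( d_u + d_v - \frac{2 d_u d_v}{d_u + d_v} \Big) = \sum_{uv \in E(G)} \frac{d_u^2 + d_v^2}{d_u + d_v}\,.
$$
Thus the claimed chain will follow by summing over $uv \in E(G)$ the pointwise estimate
$$
\frac{d_u^2 + d_v^2}{d_u + d_v} < \sqrt{d_u^2 + d_v^2} \le \sqrt{2}\,\frac{d_u^2 + d_v^2}{d_u + d_v}\,.
$$
(The parameter $\a\ge 1$ plays no role in the inequality and can be dropped from the statement.)

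Next I would prove this edge-wise estimate by elementary algebra. Setting $s = \sqrt{d_u^2+d_v^2} > 0$, the right-hand inequality is equivalent to $\sqrt{2}\, s \ge d_u + d_v$, i.e. to $2(d_u^2+d_v^2) \ge (d_u+d_v)^2$, i.e. to $(d_u-d_v)^2 \ge 0$; this holds always, with equality if and only if $d_u = d_v$. The left-hand inequality is equivalent to $d_u + d_v > s$, i.e. to $(d_u+d_v)^2 > d_u^2 + d_v^2$, i.e. to $2 d_u d_v > 0$; since every vertex has a neighbor, $d_u, d_v \ge 1$, so this is strict on every edge.

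Finally I would sum over the (nonempty) edge set. The lower bound is strict on each edge, so $SO(G) > M_1(G) - 2\, ISI(G)$ is always strict. For the upper bound, equality in the summed inequality forces equality in each term, i.e. $d_u = d_v$ for every $uv \in E(G)$; this is precisely the condition that each connected component of $G$ is regular, and conversely when each component is regular every edge joins vertices of equal degree, so equality holds throughout. I do not anticipate a genuine obstacle here — the argument is routine — the only point requiring care is that strictness of the lower bound relies on $d_u d_v > 0$, which is exactly the standing hypothesis that no vertex is isolated.
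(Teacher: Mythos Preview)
Your proof is correct and follows essentially the same route as the paper: both arguments reduce to the edge-wise estimates $\sqrt{d_u^2+d_v^2} < d_u+d_v \le \sqrt{2}\,\sqrt{d_u^2+d_v^2}$ (equivalently $(d_u-d_v)^2\ge 0$ and $2d_ud_v>0$) and then sum over $E(G)$, with the same equality analysis. The only cosmetic difference is that you first collapse $M_1(G)-2\,ISI(G)$ into the single sum $\sum_{uv}\frac{d_u^2+d_v^2}{d_u+d_v}$, which makes the comparison with $SO(G)$ slightly more transparent, whereas the paper keeps the $M_1$ and $ISI$ terms separate and manipulates the identity $(d_u^2+d_v^2)+2d_ud_v=(d_u+d_v)^2$; the underlying content is identical.
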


\begin{proof}
It is well-known that for $x,y>0,$ we have
$$
\begin{aligned}
x^2 + y^2
& < (x + y)^2
\le 2(x^2 + y^2 ),
\\
\sqrt{x^2 + y^2}
& < x + y
\le \sqrt{2} \sqrt{x^2 + y^2 } \,,
\end{aligned}
$$
and the equality
$$
\sqrt{d_u^2 + d_v^2}\sqrt{d_u^2 + d_v^2} + 2 d_u d_v = (d_u + d_v)^2
$$
give
$$
\begin{aligned}
(d_u + d_v) \sqrt{d_u^2 + d_v^2} + 2 d_u d_v
& > (d_u + d_v)^2,
\\
\sqrt{d_u^2 + d_v^2} + \frac{2 d_u d_v}{d_u + d_v}
& > d_u + d_v ,
\\
SO(G) + 2 ISI(G)
& > M_1(G) .
\end{aligned}
$$
In a similar way, we obtain
$$
\begin{aligned}
\frac1{\sqrt{2}} \, (d_u + d_v) \sqrt{d_u^2 + d_v^2} + 2 d_u d_v
& \le (d_u + d_v)^2,
\\
\sqrt{d_u^2 + d_v^2} + \sqrt{2} \, \frac{2 d_u d_v}{d_u + d_v}
& \le \sqrt{2} \, (d_u + d_v) ,
\\
SO(G) + 2\sqrt{2}\, ISI(G)
& \le \sqrt{2} \, M_1(G) .
\end{aligned}
$$

The equality in this last inequality is tight if and only if
$2(d_u^2 + d_v^2 ) = (d_u + d_v)^2$
for each edge $uv$, i.e.,
$d_u = d_v$
for every $uv \in E(G)$, and this happens if and only if $G$ has regular connected components.
\end{proof}

\section{Optimization problems}

We start this section with a technical result.

\begin{proposition} \label{p:edge}
Let $G$ be any graph, $u,v \in V(G)$ with $uv \notin E(G)$, and $\a,\b \in \RR \setminus \{0\}$ with $\a\b > 0$.
Then $KA_{\a,\b}(G \cup \{uv\}) > KA_{\a,\b}(G)$.
If $\a>0$, then ${}_{red}KA_{\a,\b}(G \cup \{uv\}) > {}_{red}KA_{\a,\b}(G)$.
Furthermore, if $\a<0$ and $G$ does not have pendant vertices, then ${}_{red}KA_{\a,\b}(G \cup \{uv\}) > {}_{red}KA_{\a,\b}(G)$.
\end{proposition}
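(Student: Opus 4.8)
The plan is to compare the two indices term by term after adding the edge: set $G':=G\cup\{uv\}$. Inserting $uv$ raises $d_u$ and $d_v$ by one, leaves every other degree unchanged, and creates one new edge; moreover, since $uv\notin E(G)$, every edge of $G$ meeting $u$ has the form $uw$ with $w\neq v$, every edge of $G$ meeting $v$ has the form $vw$ with $w\neq u$, and no edge of $G$ meets both $u$ and $v$, so the edges disjoint from $\{u,v\}$ contribute equally to $G$ and to $G'$. Writing $h_c(s):=(s^{\a}+c)^{\b}$, I would record the resulting identity
\[
\begin{aligned}
KA_{\a,\b}(G')-KA_{\a,\b}(G)
&=\big((d_u{+}1)^{\a}+(d_v{+}1)^{\a}\big)^{\b}
\\
&\quad+\sum_{\substack{uw\in E(G)\\ w\neq v}}\big[h_{d_w^{\a}}(d_u{+}1)-h_{d_w^{\a}}(d_u)\big]
+\sum_{\substack{vw\in E(G)\\ w\neq u}}\big[h_{d_w^{\a}}(d_v{+}1)-h_{d_w^{\a}}(d_v)\big],
\end{aligned}
\]
together with its reduced analogue, in which every degree $d$ is replaced by $d-1$; there the new edge contributes $\big(d_u^{\a}+d_v^{\a}\big)^{\b}$ and the summands become $h_{(d_w-1)^{\a}}(d_u)-h_{(d_w-1)^{\a}}(d_u-1)$ and the symmetric term for $v$.

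The single analytic input I need is the monotonicity fact: \emph{if $\a\b>0$ and $c\geq 0$, then $h_c$ is strictly increasing on $(0,\infty)$, and if moreover $\a,\b>0$ then $h_c$ is strictly increasing on all of $[0,\infty)$.} This is immediate: $\a\b>0$ forces $\a$ and $\b$ to share a sign; if both are positive then $s\mapsto s^{\a}$ and $t\mapsto t^{\b}$ are strictly increasing on $[0,\infty)$ and the shift by $c$ preserves order, while if both are negative then $s\mapsto s^{\a}$ and $t\mapsto t^{\b}$ are strictly decreasing on $(0,\infty)$ and so their composition is strictly increasing; equivalently $h_c'(s)=\a\b\,s^{\a-1}(s^{\a}+c)^{\b-1}>0$ for $s>0$.

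From here the three claims drop out of the identities. For $KA_{\a,\b}$, all degrees are $\geq 1$, hence $d_w^{\a}>0$ and the lemma makes every bracketed difference strictly positive; adding the strictly positive new-edge term gives $KA_{\a,\b}(G')>KA_{\a,\b}(G)$. For ${}_{red}KA_{\a,\b}$ with $\a>0$ (so $\b>0$), all reduced degrees are $\geq 0$, so both indices are defined, and since $d_u>d_u-1\geq 0$ the $[0,\infty)$ half of the lemma — applied with the possibly-zero constant $c=(d_w-1)^{\a}$ — makes each modified-edge difference strictly positive, while the new edge contributes $(d_u^{\a}+d_v^{\a})^{\b}>0$. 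For ${}_{red}KA_{\a,\b}$ with $\a<0$ and $G$ pendant-free, every degree is $\geq 2$ and every reduced degree is $\geq 1$, so $G'$ is again pendant-free and both indices are defined; here $c=(d_w-1)^{\a}>0$ and $d_u-1\geq 1$, so the $(0,\infty)$ half of the lemma applies and the new edge again contributes $(d_u^{\a}+d_v^{\a})^{\b}>0$.

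I do not expect a genuine obstacle; the only delicate point is the domain bookkeeping for the reduced index when $\a>0$, where a reduced degree may honestly be $0$. There I must not appeal to the derivative formula at $s=0$ but instead use that $h_c$ is a composition of functions monotone on the \emph{closed} half-line $[0,\infty)$ (on which $0^{\b}=0$ is a legitimate, indeed minimal, value), which is why the statement for $\a>0$ needs no pendant-free hypothesis.
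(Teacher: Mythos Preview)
Your proof is correct and follows essentially the same approach as the paper: both write the difference $KA_{\a,\b}(G')-KA_{\a,\b}(G)$ as the new-edge term plus sums over the neighbors of $u$ and $v$, and conclude positivity from the fact that $(x^{\a}+y^{\a})^{\b}$ is strictly increasing in each variable when $\a\b>0$. You are in fact more careful than the paper on the reduced case with $\a>0$, where reduced degrees may vanish; the paper dispatches that case with a single sentence, while you explicitly note why monotonicity on $[0,\infty)$ (rather than just $(0,\infty)$) is needed and available.
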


\begin{proof}
Let $\{w_1,\dots,w_{d_u}\}$ and $\{w^1,\dots,w^{d_v}\}$ be the sets of neighbors of $u$ and $v$ in $G$, respectively.
Since $\a\b > 0$, the function
$$
U(x,y)
= \big(x^\a + y^\a \big)^\b
$$
is strictly increasing in each variable if $x,y>0$.
Hence,
$$
\begin{aligned}
KA_{\a,\b}(G \cup \{uv\})- KA_{\a,\b}(G)
& = \big( (d_u+1)^\a + (d_v+1)^\a \big)^\b
+
\\
& \quad + \sum_{j=1}^{d_u} \Big( \, \big( (d_u+1)^\a + d_{w_j}^\a \big)^\b - \big( d_u^\a + d_{w_j}^\a \big)^\b \, \Big)
\\
& \quad + \sum_{k=1}^{d_v} \Big( \, \big( (d_v+1)^\a + d_{w^k}^\a \big)^\b - \big( d_v^\a + d_{w^k}^\a \big)^\b \, \Big)
\\
& > \big( (d_u+1)^\a + (d_v+1)^\a \big)^\b
> 0.
\end{aligned}
$$
The same argument gives the results for the ${}_{red}KA_{\a,\b}$ index.
\end{proof}

\smallskip

Given an integer number $n\ge 2$, let $\Gamma(n)$ (respectively, $\Gamma_{\!\!c}(n)$) be the set of graphs
(respectively, connected graphs) with $n$ vertices.

\smallskip

We study in this section the extremal graphs for the $KA_{\a,\b}$ index on $\Gamma_{\!\!c}(n)$ and $\Gamma(n)$.

\begin{theorem} \label{t:op2}
Consider $\a,\b \in \RR \setminus \{0\}$ with $\a\b > 0$, and an integer $n\ge 2$.

$(1)$ The complete graph $K_n$ is the unique graph that maximizes $KA_{\a,\b}$ on $\Gamma_{\!\!c}(n)$ or $\Gamma(n)$.

$(2)$ Any graph that minimizes $KA_{\a,\b}$ on $\Gamma_{\!\!c}(n)$ is a tree.

$(3)$ If $n$ is even, then the union of $n/2$ paths $P_2$ is the unique graph that minimizes $KA_{\a,\b}$ on $\Gamma(n)$.
If $n$ is odd, then the union of $(n-3)/2$ paths $P_2$ with a path $P_3$ is
the unique graph that minimizes $KA_{\a,\b}$ on $\Gamma(n)$.

$(4)$ Furthermore, if $\a,\b > 0$, then the three previous statements hold if we replace $KA_{\a,\b}$ with $_{red}KA_{\a,\b}$.
\end{theorem}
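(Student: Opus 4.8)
The plan is to reduce everything to the strict monotonicity of $KA_{\a,\b}$ under edge insertion, which is precisely Proposition \ref{p:edge}, together with two elementary pointwise bounds on the edge weights; since $\Gamma(n)$ and $\Gamma_{\!\!c}(n)$ are finite, extremal graphs exist. For $(1)$: as $\a\b>0$, Proposition \ref{p:edge} shows that adding a missing edge strictly increases $KA_{\a,\b}$ and keeps the graph connected when it was, so a maximizer on $\Gamma(n)$ or $\Gamma_{\!\!c}(n)$ has no missing edge and must be $K_n$; and any $G\neq K_n$ is strictly beaten by $G\cup\{uv\}$ for a non-edge $uv$, giving uniqueness. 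For $(2)$: reading Proposition \ref{p:edge} in reverse, deleting an edge strictly decreases $KA_{\a,\b}$, so if a minimizer on $\Gamma_{\!\!c}(n)$ contained a cycle we could delete a cycle edge, remain in $\Gamma_{\!\!c}(n)$, and strictly decrease the index --- a contradiction --- hence the minimizer is connected and acyclic, i.e.\ a tree.

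For $(3)$ I would first record that, for $d_u,d_v\ge 1$ and $\a\b>0$, one has $(d_u^{\a}+d_v^{\a})^{\b}\ge 2^{\b}$, with equality iff $d_u=d_v=1$, and $(d_u^{\a}+d_v^{\a})^{\b}\ge (2^{\a}+1)^{\b}>2^{\b}$ whenever $\max\{d_u,d_v\}\ge 2$; both are immediate in each sign regime (if $\a,\b>0$ the base is $\ge 2$, resp.\ $\ge 2^{\a}+1$, and $t\mapsto t^{\b}$ increases; if $\a,\b<0$ it is $\le 2$, resp.\ $\le 2^{\a}+1$, and $t\mapsto t^{\b}$ decreases). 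Splitting $E(G)=E_1\cup E_2$, where $E_1$ is the set of edges with both endpoints of degree $1$, these bounds give
$$
KA_{\a,\b}(G)\ \ge\ 2^{\b}|E_1|+(2^{\a}+1)^{\b}|E_2|\ =\ 2^{\b}m+\big((2^{\a}+1)^{\b}-2^{\b}\big)|E_2|.
$$
Since $G$ has no isolated vertex, $m\ge\lceil n/2\rceil$; if $m=\lceil n/2\rceil$ the degree sequence is forced: all degrees equal $1$ when $n$ is even, so $G$ is the matching (index $2^{\b}n/2$); and when $n$ is odd, exactly one vertex has degree $2$ with two leaf neighbours, so $G$ is the near-matching (index $2^{\b}(n-3)/2+2(2^{\a}+1)^{\b}$). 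If instead $m\ge\lceil n/2\rceil+1$: for even $n$, $KA_{\a,\b}(G)\ge 2^{\b}m\ge 2^{\b}(n/2+1)>2^{\b}n/2$; for odd $n$, since $2m=\sum_v d_v$ is even while $n$ is odd not all degrees can equal $1$, so some vertex has degree $\ge 2$, hence $|E_2|\ge 2$, and the display yields $KA_{\a,\b}(G)\ge 2^{\b}(m-2)+2(2^{\a}+1)^{\b}\ge 2^{\b}(n-1)/2+2(2^{\a}+1)^{\b}>2^{\b}(n-3)/2+2(2^{\a}+1)^{\b}$. So the matching (resp.\ near-matching) is the unique minimizer, after a direct check of $n=2,3$.

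For $(4)$ with $\a,\b>0$: Proposition \ref{p:edge} still gives strict monotonicity of ${}_{red}KA_{\a,\b}$ under edge insertion, and for $\a>0$ this needs no hypothesis on pendant vertices, so the proofs of $(1)$ and $(2)$ transfer verbatim. For $(3)$, every $E_1$-edge contributes $(0+0)^{\b}=0$ and every $E_2$-edge contributes at least $(1^{\a}+0)^{\b}=1$ to ${}_{red}KA_{\a,\b}(G)$, so ${}_{red}KA_{\a,\b}(G)\ge|E_2|$; the matching has index $0$ and the near-matching index $2$. If $G$ is not the extremal graph then $m\ge\lceil n/2\rceil+1$, and a short case split on the set $S$ of vertices of degree $\ge 2$ closes it: either some vertex of $S$ has degree $\ge 3$, forcing $|E_2|\ge 3$, or every vertex of $S$ has degree exactly $2$, whence $|S|=2m-n\ge 3$ and $|E_2|\ge|S|\ge 3$; either way ${}_{red}KA_{\a,\b}(G)\ge|E_2|\ge 3>2$, while for even $n$ already $|E_2|\ge 1$ gives ${}_{red}KA_{\a,\b}(G)\ge 1>0$.

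The step I expect to be the real obstacle is the uniqueness half of $(3)$ and of its reduced counterpart: the crude bound $KA_{\a,\b}(G)\ge 2^{\b}m$ is too weak to separate a non-extremal graph from the near-matching when $n$ is odd and $\a$ is large (a high-degree vertex can make single edge weights huge), so one must retain the correction term $\big((2^{\a}+1)^{\b}-2^{\b}\big)|E_2|$ and argue that non-extremality forces simultaneously an extra edge and $|E_2|\ge 2$; handling all the strict inequalities, the equality cases, and the small-$n$ degeneracies precisely is where the care is needed.
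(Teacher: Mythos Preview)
Your proof is correct and follows essentially the same route as the paper: items $(1)$ and $(2)$ via Proposition \ref{p:edge}, and item $(3)$ via the pointwise lower bounds $2^{\b}$ and $(1+2^{\a})^{\b}$ on edge weights combined with the handshaking bound $m\ge\lceil n/2\rceil$. The only organizational difference is that for odd $n$ the paper fixes one vertex $w$ with $d_w\ge 2$ and separates edges by incidence to $w$, whereas your $E_1/E_2$ split does the same bookkeeping globally; for $(4)$ the paper simply asserts that ``the same argument gives the results'', while you spell out the $|E_2|\ge 3$ count --- a worthwhile addition, since in the reduced index the $2^{\b}(m-2)$ contribution collapses to zero and the uniqueness claim does require that extra step.
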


\begin{proof}
Let us denote by $m$ the cardinality of the set of edges of a graph, and by $\d$ its minimum degree.

Assume that $n$ is even.
It is well known that the sum of the degrees of a graph is equal to twice the number of edges of the graph (handshaking lemma).
Thus, $2m \ge n\d \ge n$.
Since $\a\b > 0$, the function
$$
U(x,y)
= \big(x^\a + y^\a \big)^\b
$$
is strictly increasing in each variable if $x,y>0$.
Hence, for any graph $G \in \Gamma(n)$, we have
$$
\begin{aligned}
KA_{\a,\b}(G)
& = \sum_{uv \in E(G)} \big(d_u^\a + d_{v}^\a \big)^\b
\ge \sum_{uv \in E(G)} \big(1^\a + 1^\a \big)^\b
\\
& = 2^\b m
\ge 2^\b \, \frac{n}{2}
= 2^{\b-1} n,
\end{aligned}
$$
and the equality is tight in the inequality if and only if $d_u = 1$ for all $u \in V(G)$, i.e.,
$G$ is the union of $n/2$ path graphs $P_2$.

Finally, assume that $n$ is odd.
Fix a graph $G \in \Gamma(n)$.
If $d_u=1$ for every $u \in V(G)$, then handshaking lemma gives $2m = n$, a contradiction (recall that $n$ is odd).
Therefore, there exists a vertex $w$ with $d_w \ge 2$.
By handshaking lemma we have $2m \ge (n-1) \d+2 \ge n +1$.
Recall that the set of neighbors of the vertex $w$ is denoted by $N(w)$.
Since $U(x,y)$ is a strictly increasing function in each variable, we obtain
$$
\begin{aligned}
KA_{\a,\b}(G)
& = \sum_{u \in N(w)} \big(d_u^\a + d_{w}^\a \big)^\b
+ \sum_{uv \in E(G), u,v\neq w} \big(d_u^\a + d_{v}^\a \big)^\b
\\
& \ge \sum_{u \in N(w)} \big(1^\a + 2^\a \big)^\b
+ \sum_{uv \in E(G), u,v\neq w} \big(1^\a + 1^\a \big)^\b
\\
& \ge 2 \big(1 + 2^\a \big)^\b
+ 2^\b (m-2)
\\
& \ge 2 \big(1 + 2^\a \big)^\b
+ 2^\b \Big( \frac{n+1}{2}-2 \Big)
\\
& = 2 \big(1 + 2^\a \big)^\b
+ 2^\b \frac{n-3}{2} \,,
\end{aligned}
$$
and the bound is tight if and only if $d_u = 1$ for all $u \in V(G)\setminus \{w\}$, and $d_w = 2$.
Hence, $G$ is the union of $(n-3)/2$ path graphs $P_2$ and a path graph $P_3$.

Items $(1)$ and $(2)$ follow directly from Proposition \ref{p:edge}.

If $\a,\b > 0$, then the same argument gives the results for the $_{red}KA_{\a,\b}$ index.
\end{proof}

We deal now with the optimization problem for $_{red}KA_{\a,\b}$
when $\a,\b < 0$.

Given an integer number $n\ge 3$, we denote by $\Gamma^{wp}(n)$ (respectively, $\Gamma_{\!\!c}^{wp}(n)$) the set of graphs
(respectively, connected graphs) with $n$ vertices and without pendant vertices.

\begin{theorem} \label{t:op2}
Consider $\a,\b < 0$, and an integer $n\ge 3$.

$(1)$ The complete graph $K_n$ is the unique graph that maximizes $_{red}KA_{\a,\b}$ on $\Gamma_{\!\!c}^{wp}(n)$ or $\Gamma^{wp}(n)$.

$(2)$ The cycle graph $C_n$ is the unique graph that minimizes $_{red}KA_{\a,\b}$ on $\Gamma_{\!\!c}^{wp}(n)$.

$(3)$ The union of cycle graphs are the only graphs that minimize $_{red}KA_{\a,\b}$ on $\Gamma^{wp}(n)$.
\end{theorem}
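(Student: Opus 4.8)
The plan is to dispose of the maximization part $(1)$ via Proposition~\ref{p:edge}, and the minimization parts $(2)$ and $(3)$ by an edgewise lower bound combined with the handshaking lemma. Throughout, the relevant summand is $U(x,y)=\big((x-1)^{\a}+(y-1)^{\a}\big)^{\b}$, which we only ever evaluate at arguments $\ge 2$, since graphs in $\Gamma^{wp}(n)$ have minimum degree at least $2$. For $(1)$: as $\a,\b<0$ we have $\a\b>0$, so Proposition~\ref{p:edge} applies, i.e. adding any non-edge to a graph without pendant vertices strictly increases ${}_{red}KA_{\a,\b}$; moreover adding an edge only raises degrees, so the enlarged graph is still pendant-free, and still connected if the original one was. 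Hence, if $G\in\Gamma^{wp}(n)$ (resp. $\Gamma_{\!\!c}^{wp}(n)$) is not $K_n$, pick a non-edge $uv$; then $G\cup\{uv\}$ lies in the same class with strictly larger index, and iterating shows that $K_n$ — which belongs to $\Gamma_{\!\!c}^{wp}(n)$ because $n\ge 3$ — is the unique maximizer on either family.

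For $(2)$ and $(3)$ I would first establish the pointwise inequality. Since $\a<0$, the map $t\mapsto t^{\a}$ is strictly decreasing on $(0,\infty)$, so $(x-1)^{\a}+(y-1)^{\a}$ is strictly decreasing in each of $x,y$ on $[2,\infty)$; composing with the strictly decreasing map $s\mapsto s^{\b}$ (here $\b<0$) shows that $U$ is \emph{strictly increasing} in each variable on $[2,\infty)^{2}$. Consequently, for every edge $uv$ of a graph $G\in\Gamma^{wp}(n)$ one has
$$
\big((d_u-1)^{\a}+(d_v-1)^{\a}\big)^{\b}\;\ge\;\big(1^{\a}+1^{\a}\big)^{\b}\;=\;2^{\b},
$$
with equality if and only if $d_u=d_v=2$.

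Summing over $uv\in E(G)$ and using the handshaking lemma (every vertex has degree $\ge 2$, so $2m=\sum_{u}d_u\ge 2n$, i.e. $m\ge n$) yields
$$
{}_{red}KA_{\a,\b}(G)\;\ge\;2^{\b}m\;\ge\;2^{\b}n ,
$$
and equality throughout forces $d_u=d_v=2$ for every edge; since every vertex has a neighbour, this means $G$ is $2$-regular, i.e. a disjoint union of cycles, and conversely every such graph attains the value $2^{\b}n$. This is exactly $(3)$. Restricting to $\Gamma_{\!\!c}^{wp}(n)$, the only connected $2$-regular graph on $n$ vertices is $C_n$, which indeed lies in $\Gamma_{\!\!c}^{wp}(n)$ for $n\ge 3$, so $(2)$ follows.

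I do not expect a genuine obstacle here; the points needing care are (a) the direction of the monotonicity of $U$ — the two negative exponents compose to an \emph{increasing} function, which is easy to misread; (b) checking that edge-addition keeps us inside $\Gamma^{wp}(n)$ and, for $(2)$, inside $\Gamma_{\!\!c}^{wp}(n)$; and (c) the equality analysis, namely that ``every edge has both endpoints of degree $2$'', together with the standing convention that every vertex has a neighbour, really forces $2$-regularity and hence the disjoint-union-of-cycles description.
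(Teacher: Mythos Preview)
Your proof is correct and follows essentially the same approach as the paper: item $(1)$ via Proposition~\ref{p:edge} (using $\alpha\beta>0$ and that edge-addition preserves the pendant-free condition), and items $(2)$ and $(3)$ via the edgewise monotonicity of the summand together with $m\ge n$ from the handshaking lemma. Your write-up is in fact slightly more careful than the paper's, which inadvertently computes with $KA_{\alpha,\beta}$ rather than ${}_{red}KA_{\alpha,\beta}$ (obtaining the constant $2^{(\alpha+1)\beta}$ instead of your correct $2^{\beta}$), but the underlying argument is the same.
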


\begin{proof}
Since a graph without pendant vertices satisfies $\d \ge 2$, handshaking lemma gives $2m \ge n\d \ge 2n$.
Since $\a,\b < 0$, the function
$$
U(x,y)
= \big(x^\a + y^\a \big)^\b
$$
is strictly increasing in each variable if $x,y>0$.
Hence, for any graph $G \in \Gamma^{wp}(n)$, we have
$$
\begin{aligned}
KA_{\a,\b}(G)
& = \sum_{uv \in E(G)} \big(d_u^\a + d_{v}^\a \big)^\b
\ge \sum_{uv \in E(G)} \big(2^\a + 2^\a \big)^\b
\\
& = 2^{(\a+1)\b} m
\ge 2^{(\a+1)\b} n,
\end{aligned}
$$
and the inequality is tight if and only if $d_u = 2$ for all $u \in V(G)$, i.e.,
the graph $G$ is the union of cycle graphs.
If $G$ is connected, then it is the cycle graph $C_n$.

Item $(1)$ follows from Proposition \ref{p:edge}.
\end{proof}


\section*{}

\end{document}